\numberwithin{equation}{section}
\theoremstyle{plain}
\newtheorem{Th}{Theorem}[section]
\newtheorem{Cor}[Th]{Corollary}
\theoremstyle{definition}
\newtheorem{Def}[Th]{Definition}
\newtheorem{Conj}[Th]{Conjecture}
\newtheorem{Rem}[Th]{Remark}
\newtheorem{?}[Th]{Problem}
\begin{document}

\title[Universal $\mathbb{F}_q-$Family of $v-$adic MZVs]{Universal $\mathbb{F}_q-$Family of $v-$adic multiple zeta Values over Function Fields}

\author[Qibin Shen]{Qibin Shen}

\address{Department of Mathematics\\University of Rochester \\ 500 Joseph C. Wilson Blvd., Rochester, NY, 14627} 

\email{qshen4@ur.rochester.edu}

\date{September 20, 2019}

\subjclass[2010]{Primary: 11M32. Secondary: 11R58.}

\keywords{$v-$adic Multiple Zeta Values, Multiple Harmonic Sums}

\begin{abstract} This paper aims to study the $\mathbb{F}_q-$linear relations between interpolated $v-$adic multiple zeta values over function fields. We proved a universal family of linear relations of interpolated $v-$adic MZVs, which is conjectured to generate all linear relations over $\mathbb{F}_q$.
At the end of the paper, we also show that these relations can be generalized to all multiple harmonic type sums.
\end{abstract}

\maketitle

\section{Introduction} 

Multiple Zeta Values (MZVs) was originally introduced and studied by Euler, and recently, these values showed up
again in various subjects in mathematics and
mathematical physics. In Furusho's paper \cite{F04}, by making an analytic continuation of $p-$adic multiple polylogarithms introduced by Coleman's $p-$adic iterated integration theory \cite{Col} and he was able to define $p-$adic multiple
zeta values to be a limit value at $1$ of analytically continued $p-$adic multiple
polylogarithms.

It's well known that researches on function fields and number fields often go in parallel and thus mutually beneficial. Often the former one is inspired by the latter.

Unlike the classical case, the interpolated $v-$adic Multiple Zeta Values ($v-$adic MZVs) in function field case are well-defined at all integer points. In this paper, we mainly studied $v-$adic multiple zeta values over function fields. At the end of this article we will give a more general result and give some direct application to other types of multiple zeta values.

Now we first adapt some notations before we give the definition.

\textbf{Notations:}
\begin{eqnarray*}
\mathbb{Z}_- &=& \{ \textit{negative integers} \},\\
q &=& p^f, \textit{a power of a prime p},\\
\mathbb{F}_q &=& \textit{a finite field of $q$ elements},\\
K &=& \textit{function field over $\mathbb{F}_q$},\\
\infty &=& \textit{a rational place in $K$},\\
A &=& \textit{the ring of integral elements outside $\infty$},\\
v &=& \textit{a monic prime in $A$},\\
A_{d+} &=& \textit{monics in $A$ of degree $d$},\\
K_\infty &=& \textit{completion of $K$ at $\infty$},\\
K_v &=& \textit{completion of $K$ at prime $v$}.
\end{eqnarray*}

\begin{Def}
An integer $s$ is called \textit{$q-$even} if $(q-1) | s$. Otherwise, it's called \textit{$q-$odd}.
\end{Def}
The reason to introduce ``$q-$even'' is that the behavior of Carlitz zeta values ($\zeta(s)$ of depth 1 defined in \Cref{mzv}) at $q-$even integers is similar as that of Riemann zeta values at even integers.

Now we are ready to define MZVs over $K$. Given $k \in \mathbb{Z}$ and $\mathbf{s}$ with $s_j \in \mathbb{Z}$, for $d \geq 0$, let
$$S_d(k):=\sum_{a \in A_{d+}} \frac{1}{a^k} \in K,$$
$$S_d(\mathbf{s}):=S_d(s_1)\sum_{d>d_2>\cdots>d_r \geq 0} S_{d_2}(s_2) \cdots S_{d_r}(s_r) \in K,$$
$$S^*_d(\mathbf{s}):=S_d(s_1)\sum_{d\geq d_2\geq \cdots\geq d_r \geq 0} S_{d_2}(s_2) \cdots S_{d_r}(s_r) \in K.$$
\begin{Def}\label{mzv}
Let $\mathbf{s}=(s_1, \ldots, s_r), s_i \in \mathbb{Z}$, define \textit{multiple zeta value} 
$$\zeta(\mathbf{s}):=\sum_{d_1 > \cdots > d_r \geq 0} S_{d_1}(s_1) \cdots S_{d_r}(s_r) \in K_\infty.$$
We call $r$ the depth of $\zeta(\mathbf{s})$, and if each $s_i>0$, $wt:=\sum_{i=1}^rs_i$ is the weight of $\zeta(\mathbf{s})$.
\end{Def}

Given $K$ a function field, $k \in \mathbb{Z}$, $v$ a finite prime in $A_+$ and $\mathbf{s}=(s_1,\cdots,s_r)$ with each $s_i \in \mathbb{Z}$, for $d \geq 0$. We let
$$\widetilde{S}_d(k):=\sum_{\substack{a \in A_{d+}\\ (a, v)=1}} \frac{1}{a^k} \in K,$$
and
$$\widetilde{S}_d(\mathbf{s}):=\widetilde{S}_d(s_1)\sum_{d>d_2>\cdots>d_r \geq 0} \widetilde{S}_{d_2}(s_2) \cdots \widetilde{S}_{d_r}(s_r) \in K,$$
$$\widetilde{S}^{\star}_d(\mathbf{s}):=\widetilde{S}_d(s_1)\sum_{d\geq d_2\geq \cdots\geq d_r \geq 0} \widetilde{S}_{d_2}(s_2) \cdots \widetilde{S}_{d_r}(s_r) \in K.$$

We are now able to introduce the interpolated $v-$adic multiple zeta values and $v-$adic multiple zeta star values defined first by Thakur in \cite{T04} here.
\begin{Def}
Let $\mathbf{s}=(s_1, \ldots, s_r), s_i \in \mathbb{Z}$, $v$ a prime in $A_+$, define \textit{$v-$adic multiple zeta values} 
$$\zeta_v(\mathbf{s}):=\sum_{d_1 > \cdots > d_r \geq 0} \widetilde{S}_{d_1}(s_1) \cdots \widetilde{S}_{d_r}(s_r) \in K_v.$$
And we may define \textit{$v-$adic multiple zeta star values} as
$$\zeta^{\star}_v(\mathbf{s}):=\sum_{d_1 \geq \cdots \geq d_r \geq 0} \widetilde{S}_{d_1}(s_1) \cdots \widetilde{S}_{d_r}(s_r) \in K_v.$$
\end{Def}
It's well known that these power series is convergent for any $\mathbf{s}$ with each $s_i\in \mathbb{Z}$ and any prime $v$. These values can be extended continuously to a much bigger domain $S_v$, but we will restrict ourselves in the special values at $\mathbf{s}$ with integer coordinates. The more general case of $S_v$ works similarly.

Similar to the finite MZVs in classical case, here we also define the function field version of finite MZVs and finite MZSVs.

\begin{Def}
Let $K$ be a given function field, we define $$\mathcal{A}=\frac{\prod_v\, \mathbb{F}_v}{\oplus_v\, \mathbb{F}_v},$$
where $v$ runs over all monic irreducible polynomials in $\mathbb{F}_q[t]$ and $\mathbb{F}_v:=A/(v)$. $\mathcal{A}$ is an $\mathbb{F}_q(t)-$algebra. 

Then, we define the finite multiple zeta values and finite multiple zeta star values as
$\zeta^{\mathcal{A}}(\mathbf{s}):=(\zeta_v^{\mathcal{A}} (\mathbf{s}))_{\textit{$v$ prime}} \in \mathcal{A}$. $\zeta^{\mathcal{A}\star}(\mathbf{s}):=(\zeta_v^{\mathcal{A}\star} (\mathbf{s}))_{\textit{$v$ prime}} \in \mathcal{A}$,
where
$$\zeta_{v}^{\mathcal{A}}(\mathbf{s}):=\sum_{deg(v)>d_1 > \cdots > d_r \geq 0} S_{d_1}(s_1) \cdots S_{d_r}(s_r) \mod v,$$
$$\zeta_{v}^{\mathcal{A}\star}(\mathbf{s}):=\sum_{deg(v)>d_1 \geq \cdots \geq d_r \geq 0} S_{d_1}(s_1) \cdots S_{d_r}(s_r) \mod v.$$
\end{Def}

\begin{Def}
Given $K=\mathbb{F}_q(t)$, $v$ a monic prime with degree $d$.
We define
$S_v:=\lim\limits_{\substack{\longleftarrow\\ n}}\mathbb{Z}/(q^d-1)p^n\mathbb{Z}$, and we say $s=c_{-1}+\sum_{i=0}^{\infty}c_i(q^d-1)p^i\in S_v$
 where $q^d-1<c_{-1}\leq 0, p<c_i\leq 0$ is $q-$even iff $q-1|c_{-1}$.
\end{Def}

\begin{Def}\label{trivialzero}\cite{Shen19}
Let $K=\mathbb{F}_q(t)$, each $s_j \in S_v$, $r>1$. $\zeta_v(s_1, \ldots, s_r)=0$ trivially iff either one of the following conditions holds
\begin{itemize}
    \item[(1)] $\exists1 \leq i \leq r$ such that $s_i<0$, $r-i > L_{-s_i} + deg(v)$.
    \item[(2)] $\exists1 \leq i,j\leq r$ such that $s_i,s_j<0$, $deg(v)>r-i > L_{-s_i}$, and $i-j>L_{-s_j}$.
\end{itemize}
We call such zeros \textit{trivial zeros}. Other zeros are called \textit{nontrivial}.

In particular, when $deg(v)=1$, we have $\mathbf{s}$ is a trivial zero iff there exists $1\leq i\leq r$ such that $r-i>L_{-s_i}+1$.
\end{Def}

\begin{Th}\cite{Shen19}
Given a rational function field $K=\mathbb{F}_q(t)$, a degree $1$ prime $v$, $\mathbf{s}=(s_1, \ldots, s_r) \in S_v^r$, $\zeta_v(\mathbf{s})=0$ if and only if one of the following conditions holds
\begin{itemize}
    \item[(1)] $r>1$, $\mathbf{s}$ is a trivial zero,
    \item[(2)] $r=1$, $s$ is $q-$even. 
\end{itemize}
\end{Th}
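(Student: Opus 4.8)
The plan is to separate the depth-one case $r=1$ from the higher-depth case $r>1$, reducing the vanishing question in each to the $v$-adic valuations of the single power sums $\widetilde{S}_d(s)$. Normalizing the degree-one prime to $v=t$, so that $K_v=\mathbb{F}_q((t))$, I would first compute $\widetilde{S}_d(s)$ modulo $t$. Each monic $a$ of degree $d$ coprime to $v$ has $a(0)\neq 0$ and $a^{-s}\equiv a(0)^{-s}\pmod t$; grouping the $q^{d-1}$ monics of degree $d$ with a fixed nonzero constant term yields $\widetilde{S}_d(s)\equiv q^{d-1}\sum_{c\in\mathbb{F}_q^{\times}}c^{-s}\pmod t$. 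Because $q\equiv 0$ in $\mathbb{F}_q$, this forces $v(\widetilde{S}_d(s))\geq 1$ for every $d\geq 2$, while $\widetilde{S}_0(s)=1$ and $\widetilde{S}_1(s)\equiv\sum_{c\in\mathbb{F}_q^{\times}}c^{-s}\pmod t$; by the standard character-sum evaluation the latter is $-1$ when $s$ is $q$-even and $0$ otherwise. These valuation facts are uniform in $s\in S_v$, independent of sign.

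For $r=1$ this settles the $q$-odd case at once: reducing $\zeta_v(s)=\sum_{d\geq 0}\widetilde{S}_d(s)$ modulo $t$ leaves only $\widetilde{S}_0(s)\equiv 1$, so $\zeta_v(s)$ is a $v$-adic unit and nonzero. For the $q$-even case I would prove $\zeta_v(s)=0$ by first treating negative integers and then interpolating. When $s=-n$ the sum is finite and the identity $\widetilde{S}_d(-n)=S_d(-n)-t^{\,n}S_{d-1}(-n)$ telescopes to $\zeta_v(-n)=(1-t^{\,n})\zeta(-n)$ in $K$; since $1-t^{\,n}\neq 0$ for $n\geq 1$, the known trivial zeros of the Carlitz zeta function give $\zeta_v(-n)=0$ exactly when $n$ is $q$-even (the case $s=0$ being handled directly, where the counts $\widetilde{S}_d(0)$ sum to $q=0$). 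As the negative $q$-even integers are dense in the $q$-even part of $S_v$ and $s\mapsto\zeta_v(s)$ is $v$-adically continuous, vanishing propagates to every $q$-even $s\in S_v$.

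For $r>1$ the ``if'' direction is a forcing argument built on the same power-sum computation together with the vanishing threshold: writing $-s_i=n$, the identity above and $S_d(-n)=0$ for $d>L_n$ give $\widetilde{S}_d(-n)=0$ for all $d>L_n+1$. In any admissible tuple $d_1>\cdots>d_r\geq 0$ one has $d_i\geq r-i$, so the trivial-zero condition $r-i>L_{-s_i}+1$ forces $\widetilde{S}_{d_i}(s_i)=0$ in every term, whence $\zeta_v(\mathbf{s})=0$.

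The ``only if'' direction for $r>1$ is where I expect the real difficulty. One must show that a non-trivial-zero $\mathbf{s}$ gives a nonzero sum, and my plan is a leading-coefficient argument in the $t$-adic expansion. Ordering admissible tuples by the total valuation $\sum_i v(\widetilde{S}_{d_i}(s_i))$, the minimal layer is controlled by the smallest indices $d_i$, and the non-trivial-zero hypothesis is exactly the combinatorial condition that prevents any single index from forcing the outright vanishing $\widetilde{S}_{d_i}(s_i)=0$ throughout that layer. The obstacle is then cancellation: I would reduce $\zeta_v(\mathbf{s})$ modulo a suitable power of $t$ and express the leading coefficient as a weighted sum of products of the constant-term character sums $\sum_{c\in\mathbb{F}_q^{\times}}c^{-s_i}$, then use the combinatorics of the strict inequalities $d_1>\cdots>d_r$ to show this coefficient is nonzero precisely when no index triggers the trivial-zero condition. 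Matching that purely combinatorial condition to a nonvanishing leading $v$-adic coefficient is the crux of the whole argument.
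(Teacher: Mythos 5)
First, a point of reference: this theorem is quoted in the paper from \cite{Shen19} and is not proved here at all, so there is no in-paper proof to compare your attempt against; your proposal has to stand on its own. Much of it does: the normalization $v=t$, the computation $\widetilde{S}_d(s)\equiv q^{d-1}\sum_{c\in\mathbb{F}_q^\times}c^{-s}\pmod t$, the unit argument for depth one at $q$-odd $s$, the telescoping identity $\widetilde{S}_d(-n)=S_d(-n)-t^nS_{d-1}(-n)$ giving $\zeta_v(-n)=(1-t^n)\zeta(-n)$, the density-and-continuity interpolation to all $q$-even $s\in S_v$, and the forcing argument $d_i\geq r-i>L_{-s_i}+1\Rightarrow\widetilde{S}_{d_i}(s_i)=0$ for the ``if'' direction at depth $r>1$ are all sound (the last one as stated only covers $s_i$ a negative integer; for general $s_i\in S_v$ you need the corresponding vanishing threshold for $\widetilde{S}_d(s_i)$, which is a definitional point rather than a serious obstacle).

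The genuine gap is the ``only if'' direction for $r>1$, which you yourself flag as the crux and leave as a plan rather than a proof. This is not a routine finishing step: it is the actual content of the theorem. Your mod-$t$ computation dies immediately for $r\geq 3$, since every admissible tuple then has $d_1\geq 2$ and hence every term is $\equiv 0\pmod t$; so ``the leading coefficient as a weighted sum of constant-term character sums'' cannot be read off from the first digit, and one must control the exact valuations $v(\widetilde{S}_d(s))$ for all $d$ (not merely the bound $v\geq 1$), identify the minimal-valuation layer among tuples $d_1>\cdots>d_r\geq 0$, and prove no cancellation occurs there. That no-cancellation statement is delicate: it must use $\deg v=1$ in an essential way, because for higher-degree primes the analogous claim is only conjectural (Conjecture 1.10 of the paper), so any argument that does not see the degree of $v$ is necessarily incomplete or wrong. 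As written, your proposal proves the easy inclusion (trivial zeros and $q$-even depth-one points are zeros) but not the classification itself, i.e.\ not that these are the \emph{only} zeros.
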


\begin{Conj}\cite{Shen19}
Given $K=\mathbb{F}_q(t)$, $v$ any monic prime, $\mathbf{s}=(s_1, \ldots, s_r) \in S_v^r$, $\zeta_v(\mathbf{s})=0$ if and only if one of the following conditions holds
\begin{itemize}
    \item[(1)] $r>1$, $\mathbf{s}$ is a trivial zero,
    \item[(2)] $r=1$, $s$ is $q-$even. 
\end{itemize}
\end{Conj}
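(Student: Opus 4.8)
The plan is to prove both directions of the equivalence uniformly in $\deg v$, upgrading the degree-one result of \cite{Shen19} by a $v$-adic valuation analysis of the truncated multiple harmonic sums. The basic tool is the transfer identity $\widetilde{S}_d(k)=S_d(k)-v^{-k}S_{d-\deg v}(k)$ (with the convention $S_e:=0$ for $e<0$), obtained by separating the monics $a\in A_{d+}$ according to whether $v\mid a$; it converts every question about the coprime sums $\widetilde{S}$ into one about the Carlitz power sums $S_d(k)=\sum_{a\in A_{d+}}a^{-k}$. Writing $\zeta_v(\mathbf{s})=\sum_{d\ge 0}\widetilde{S}_d(\mathbf{s})$ and using the recursion $\widetilde{S}_d(\mathbf{s})=\widetilde{S}_d(s_1)\sum_{d>e\ge 0}\widetilde{S}_e(s_2,\dots,s_r)$, the whole statement reduces to understanding the orders $\operatorname{ord}_v\widetilde{S}_d(s)$ and the support in $d$ of these sums as $s$ ranges over $S_v$.

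The \emph{if} direction (trivial zeros and $q$-even depth-one values vanish) I would obtain from the finite support of $\widetilde{S}_d(s)$ at negative $s$. For $s=-n<0$ the sum $S_d(-n)=\sum_{a\in A_{d+}}a^{n}$ is a polynomial identity that vanishes once $d>L_{n}=\lfloor n/(q-1)\rfloor$, so by the transfer identity $\widetilde{S}_d(-n)=0$ for $d>L_n+\deg v$. In the chain $d_1>\dots>d_r\ge 0$ the index $d_i$ is forced to satisfy $d_i\ge r-i$; hence condition (1) of \Cref{trivialzero}, namely $r-i>L_{-s_i}+\deg v$, annihilates $\widetilde{S}_{d_i}(s_i)$ in every term and gives $\zeta_v(\mathbf{s})=0$. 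Condition (2) is the residual case $L_{-s_i}<r-i<\deg v$: there $S_{d_i}(s_i)=0$, but the surviving term $-v^{-s_i}S_{d_i-\deg v}(s_i)$ pins $d_i$ into the window $r-i\le d_i\le L_{-s_i}+\deg v$ and thereby forces $d_j\ge d_i+(i-j)$ past the support bound $L_{-s_j}$, killing $\widetilde{S}_{d_j}(s_j)$. This is the bookkeeping of \cite{Shen19}, now run with $\deg v$ arbitrary rather than $1$, and it holds for every prime $v$.

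The \emph{only-if} direction---that every \emph{nontrivial} value is nonzero---is the real content and the place where $\deg v>1$ differs genuinely from $\deg v=1$. For depth $r=1$ I would show $\sum_{d\ge 0}\widetilde{S}_d(s)=0$ exactly when $s$ is $q$-even by extracting the leading $v$-adic term of the partial sums from the Carlitz--Goss interpolation of $S_d(s)$: indexed by the expansion $s=c_{-1}+\sum_i c_i(q^{\deg v}-1)p^i\in S_v$, the leading coefficients cancel precisely when $q-1\mid c_{-1}$, which is the definition of $q$-even in $S_v$, and otherwise the lowest-order term is a nonzero element of $\mathbb{F}_v$. For $r>1$ and $\mathbf{s}$ nontrivial I would induct on $r$ via a Newton-polygon argument: compute $\operatorname{ord}_v\widetilde{S}_d(s_i)$ for each factor, show that among all admissible chains the total order $\sum_i\operatorname{ord}_v\widetilde{S}_{d_i}(s_i)$ is minimized by a \emph{unique} chain, and check that its leading coefficient in $\mathbb{F}_v=\mathbb{F}_{q^{\deg v}}$ is nonzero. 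Nontriviality of $\mathbf{s}$ is exactly the hypothesis guaranteeing that this minimizing chain lies inside the relevant support window, so the minimal-order term is not one of the vanishing boundary terms.

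The hard part is this last step for $\deg v>1$. When $\deg v=1$ the residues lie in $\mathbb{F}_q$ and every leading coefficient is a short sum of $(q-1)$-st power values that one evaluates directly; when $\deg v>1$ they lie in $\mathbb{F}_{q^{\deg v}}$, and the transfer identity shifts the degree index by $\deg v$ rather than by $1$, so $\operatorname{ord}_v\widetilde{S}_d(s)$ no longer increases one step at a time and several chains can share the minimal total order. The genuine obstacle is to exclude an accidental cancellation among these several $\mathbb{F}_{q^{\deg v}}$-valued leading coefficients; proving their sum is nonzero appears to demand a new ingredient---either an exact formula for $S_d(s)\bmod v^{m}$, or an Anderson--Thakur type generating-function identity controlling the power sums simultaneously for all $d$ in a fixed residue class---and the lack of such an input is presumably why the statement remains only a conjecture for $\deg v>1$.
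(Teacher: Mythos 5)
You have not proved this statement, and neither does the paper: it is stated as a \emph{Conjecture} (attributed to \cite{Shen19}), and the paper proves it only in the special case $\deg v=1$, which appears as the preceding Theorem. So there is no ``paper proof'' to compare against; the right question is whether your argument closes the gap that keeps this a conjecture, and by your own admission it does not. Your \emph{if} direction (trivial zeros vanish, and depth-one $q$-even values vanish) is fine in outline --- the transfer identity $\widetilde{S}_d(k)=S_d(k)-v^{-k}S_{d-\deg v}(k)$ is correct, and the finite-support bookkeeping for negative exponents is exactly the content already established in \cite{Shen19}; this direction is not where the difficulty lies.

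The genuine gap is in the \emph{only-if} direction, and it is precisely the step you flag yourself: for $\deg v>1$ you must rule out cancellation among the several chains $d_1>\dots>d_r\ge 0$ that can share the minimal total $v$-adic order, whose leading coefficients now live in $\mathbb{F}_{q^{\deg v}}$ rather than $\mathbb{F}_q$. You describe the desired Newton-polygon argument (unique minimizing chain, nonzero leading coefficient) but then concede that establishing nonvanishing of the sum of leading terms ``appears to demand a new ingredient'' which you do not supply. A proof outline whose central lemma is announced as missing is not a proof; it is a restatement of why the problem is open. If you want to make partial progress rigorous, the honest formulation would be: (a) prove the \emph{if} direction for all $v$ (this you can likely complete with the support estimates you sketch, and it is worth writing out since the paper only asserts it via \cite{Shen19}); and (b) isolate, as a precise standalone conjecture, the statement about unique minimal-order chains and nonvanishing leading coefficients in $\mathbb{F}_{q^{\deg v}}$, showing that it implies the full conjecture. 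As written, the proposal conflates the two and presents conditional reasoning as if it were a proof strategy that merely needs routine details filled in.
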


\begin{Rem}
To keep analogy with Riemann zeta values and to keep the usual terminology, we also call depth one zeros at even integers as trivial, by a slight abuse of the terminology. 
\end{Rem}

\section{Universal $\mathbb{F}_q-$Family of MZVs}
Now we study the $\mathbb{F}_q-$linear relations of $v-$adic MZVs. In this section, we will mainly study a family of linear relations of $v-$adic MZVs with coefficients in $\mathbb{F}_q$ which hold for all primes $v$ and all function fields with rational infinity place. We call such kind of family universal $\mathbb{F}_q-$family. In fact, we noticed that these relations can actually be applied to more general cases.

{\bf Here and below, rather than repeating complicated sum expressions, 
if, say,  the second sum of the previous expression  is unchanged in a given equality, we just denote it by (temporary notation) $\sum_2$}.
\begin{Th}\label{Thm2}
$K$ is a function field over $\mathbb{F}_q$ with a rational infinity place, $v$ is a prime, $\mathbf{s}=(s_i)_{i=1}^n$. 
If $s_i\in \mathbb{Z}$ are distinct and $q-$even and $n$ is odd, then we have:
\begin{eqnarray*}
\sum_{\sigma\in S_n}sgn(\sigma)\zeta_v(\sigma(\mathbf{s}))=0
\end{eqnarray*}
where $S_n$ is the symmetry group and sgn is the sign function on $S_n$.
\end{Th}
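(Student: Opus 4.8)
The plan is to antisymmetrize the left-hand side into a single determinantal series and then annihilate it using a depth-one vanishing property together with the antisymmetry of a determinant. First I would expand each $\zeta_v(\sigma(\mathbf{s}))$ by its defining series and interchange the (finite) sum over $S_n$ with the $v$-adically convergent inner series, which is harmless because $S_n$ is finite. Since the index region $d_1>\cdots>d_n\ge 0$ does not depend on $\sigma$, for each fixed degree-tuple the inner signed sum collapses to a determinant, giving
\[
\sum_{\sigma\in S_n} sgn(\sigma)\,\zeta_v(\sigma(\mathbf{s})) \;=\; \sum_{d_1>\cdots>d_n\ge 0}\det\!\big(\widetilde{S}_{d_i}(s_j)\big)_{1\le i,j\le n},
\]
because $\det(M)=\sum_\sigma sgn(\sigma)\prod_i M_{i,\sigma(i)}$ with $M_{ij}=\widetilde{S}_{d_i}(s_j)$. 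Everything then reduces to showing this iterated sum of determinants vanishes when $n$ is odd.

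The single arithmetic input I would isolate is the depth-one statement that for a $q$-even integer $s$ one has $\sum_{d\ge 0}\widetilde{S}_d(s)=\zeta_v(s)=0$ for \emph{every} prime $v$; this is the elementary (``trivial/even zero'') direction of the classification of \cite{Shen19}, which holds uniformly in $v$. Its content is precisely that each column $\big(\widetilde{S}_d(s_j)\big)_{d\ge 0}$ of the infinite coefficient array has vanishing total sum.

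The combinatorial heart is then the following claim, formulated for abstract sequences $f_j\colon \mathbb{Z}_{\ge 0}\to K_v$: if every column $\big(f_j(d)\big)_{d\ge 0}$ sums to zero, then $\sum_{d_1>\cdots>d_n\ge 0}\det\!\big(f_j(d_i)\big)=0$ whenever $n$ is odd. I would prove this by peeling the outermost (largest) variable, keeping the inner nested sum fixed in the spirit of the $\sum_2$ convention: antisymmetry of the determinant lets me sum $d_1$ freely over $d_1>d_2$, and the column-sum-zero relation $\sum_{d_1>d_2}f_k(d_1)=-\sum_{d_1=0}^{d_2}f_k(d_1)$ converts the top row into a partial-sum row, which one row operation turns back into a determinant of the same shape on a shortened range. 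Iterating produces equal determinants weighted by alternating signs, and the resulting count telescopes to zero exactly because $n$ is odd; for even $n$ the analogous count does not cancel, so the parity hypothesis is genuinely used. I have checked this cancellation directly in the finitely supported model for $n=1,3$ and its failure for $n=2$, where solving the column relation for the smallest degree forces an alternating sum of equal determinants over an even number of terms. The mechanism is characteristic-free, which is reassuring since we work in characteristic $p$.

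The step I expect to be the main obstacle is making this last cancellation fully rigorous for genuinely infinite, $v$-adically convergent rows rather than finitely supported ones: I must justify the rearrangements, control the sign bookkeeping through the induction, and confirm that it is the oddness of $n$ (and not some characteristic hypothesis such as $p\neq 2$) that forces the vanishing. A secondary point to secure is the uniform-in-$v$ validity of the depth-one vanishing, since the full classification is quoted as an equivalence only for $\deg v=1$. Finally, because the argument uses nothing about $\widetilde{S}_d$ beyond column-sum-zero, it should transplant verbatim to any multiple harmonic type sum whose depth-one value vanishes, matching the generalization promised in the abstract.
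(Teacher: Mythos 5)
Your proposal runs on the same engine as the paper's own proof: antisymmetrize, invoke the depth-one vanishing $\zeta_v(s)=\sum_{d\ge 0}\widetilde{S}_d(s)=0$ for $q$-even $s$, use it to free one summation index, redistribute that index over the possible positions of the remaining chain, and observe that coincidence terms cancel while the strictly interleaved terms carry alternating signs whose total vanishes precisely because $n$ is odd. (Your worry about uniformity in $v$ is easily discharged: what is needed is only this standard depth-one fact, which the paper also asserts at the start of its proof, not the $\deg v=1$ zero classification.) The differences are in packaging, and they are good ones: you peel the largest index where the paper peels the smallest ($d_n=0$ versus $d_n>0$), and your determinantal reformulation $\sum_{d_1>\cdots>d_n\ge 0}\det\bigl(\widetilde{S}_{d_i}(s_j)\bigr)$ makes the equal-index terms die instantly (two equal rows), replacing the paper's explicit transposition-pairing argument for its ${\sum}_2$; likewise your abstract column-sum-zero lemma is exactly what the paper later isolates as ``multiple harmonic type sums'' in its final section.

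The one step you must repair is the mechanism you sketch for that lemma, which as literally written is circular. After replacing the top row by $\sum_{e>d_2}f_j(e)=-\sum_{e\le d_2}f_j(e)$, the row operation of adding the second row merely converts the tail sum $\sum_{e>d_2}f_j(e)$ into $\sum_{e\ge d_2}f_j(e)$, i.e. toggles a strict truncation into a weak one; iterating this never shortens any range and produces no telescoping. The correct finish --- which is what your phrase ``equal determinants weighted by alternating signs'' should mean --- is a single redistribution: discard the $e=d_2$ contribution (equal rows), expand the partial-sum row by multilinearity, and split the range of $e$ according to whether $e$ equals some $d_i$ (determinant vanishes) or lies in a slot $d_i>e>d_{i+1}$, $i=2,\dots,n$, with the convention $d_{n+1}:=-1$. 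Sorting the rows in the slot-$i$ term costs a sign $(-1)^{i-1}$, and each sorted sum reproduces the full sum $T$, so
\begin{equation*}
T\;=\;-\Bigl(\sum_{i=2}^{n}(-1)^{i-1}\Bigr)T\;=\;0 \qquad\text{for odd } n,
\end{equation*}
since that alternating sum has $n-1$ (an even number of) terms; for even $n$ it equals $1$ and one only learns $T=T$, which is where the parity hypothesis enters. All the rearrangements are legitimate in $K_v$ because every series involved has terms tending to $0$ $v$-adically, the same justification the paper's own manipulations require.
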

\begin{proof}
Using the fact that all $s_i's$ are $q-$even, we have $\zeta_v(s_i)=\sum_{d_{s_i}=0}^{\infty}\widetilde{S}_{d_{s_i}}(s_i)=0.$
Hence, this theorem holds for $n=1$ and we get
\begin{eqnarray}
\widetilde{S}_{0}(s_i)=-\sum_{d_{s_i}=1}^{\infty}\widetilde{S}_{d_{s_i}}(s_i).\label{E1}
\end{eqnarray}

We have
\begin{eqnarray*}
&&\sum_{\sigma\in S_n}sgn(\sigma)\zeta_v(\sigma(\mathbf{s}))\\
&=&\sum_{\substack{\sigma\in S_n\\ d_{s_{\sigma(1)}}>\cdots>d_{s_{\sigma(n)}}\geq 0}}sgn(\sigma)\prod_{i=1}^n\widetilde{S}_{d_{s_{\sigma(i)}}}(s_{\sigma(i)})\\
&=&\sum_{\substack{\sigma\in S_n\\ d_{s_{\sigma(1)}}>\cdots>d_{s_{\sigma(n)}}> 0}}sgn(\sigma)\prod_{i=1}^n\widetilde{S}_{d_{s_{\sigma(i)}}}(s_{\sigma(i)})+\sum_{\substack{\sigma\in S_n\\ d_{s_{\sigma(1)}}>\cdots>d_{s_{\sigma(n)}}= 0}}sgn(\sigma)\prod_{i=1}^n\widetilde{S}_{d_{s_{\sigma(i)}}}(s_{\sigma(i)})\\
&=&{\sum}_1+\sum_{\substack{\sigma\in S_n\\ d_{s_{\sigma(1)}}>\cdots>d_{s_{\sigma(n-1)}}>0}}sgn(\sigma)\prod_{i=1}^{n-1}\widetilde{S}_{d_{s_{\sigma(i)}}}(s_{\sigma(i)})\left(-\sum_{d_{s_{\sigma(n)}}=1}^{\infty}\widetilde{S}_{d_{s_{\sigma(n)}}}(s_{\sigma(n)})\right)\\
&=&{\sum}_1-\sum_{i=1}^{n}\sum_{\substack{\sigma\in S_n\\ d_{s_{\sigma(1)}}>\cdots>d_{s_{\sigma(n)}}>d_{s_{\sigma(i)}}>\cdots>d_{s_{\sigma(n-1)}}> 0}}sgn(\sigma)\prod_{i=1}^n\widetilde{S}_{d_{s_{\sigma(i)}}}(s_{\sigma(i)})\\
&+&\sum_{i=1}^{n-1}-\sum_{\substack{\sigma\in S_n\\ d_{s_{\sigma(1)}}>\cdots>d_{s_{\sigma(i)}}= d_{s_{\sigma(n)}}>\cdots>0}}sgn(\sigma)\prod_{i=1}^{n}\widetilde{S}_{d_{s_{\sigma(i)}}}(s_{\sigma(i)})\\
&=&\sum_{i=1}^{n-1}-\sum_{\substack{\sigma\in S_n\\ d_{s_{\sigma(1)}}>\cdots>d_{s_{\sigma(n)}}>d_{s_{\sigma(i)}}>\cdots>d_{s_{\sigma(n-1)}}> 0}}sgn(\sigma)\prod_{i=1}^n\widetilde{S}_{d_{s_{\sigma(i)}}}(s_{\sigma(i)})\\
&-&\sum_{i=1}^{n-1}\sum_{\substack{\sigma\in S_n\\ d_{s_{\sigma(1)}}>\cdots>d_{s_{\sigma(i)}}= d_{s_{\sigma(n)}}>\cdots>0}}sgn(\sigma)\prod_{i=1}^{n}\widetilde{S}_{d_{s_{\sigma(i)}}}(s_{\sigma(i)}).
\end{eqnarray*}

The third equality follows by (\ref{E1}). The fourth equality holds since we can break the second term in the third line into two parts listed in the fourth and fifth lines. The fifth equality is just adding ${\sum}_1$ to latter term in the fourth line.

Hence, we have
\begin{eqnarray*}
&&\sum_{\sigma\in S_n}sgn(\sigma)\zeta_v(\sigma(\mathbf{s}))\\
&=&\sum_{i=1}^{n-1}(-1)^{n-i}\sum_{\substack{\sigma\in S_n\\ d_{s_{\sigma(1)}}>\cdots>d_{s_{\sigma(n)}}> 0}}sgn(\sigma)\prod_{i=1}^n\widetilde{S}_{d_{s_{\sigma(i)}}}(s_{\sigma(i)})\\
&-&\sum_{i=1}^{n-1}\sum_{\substack{\sigma\in S_n\\ d_{s_{\sigma(1)}}>\cdots>d_{s_{\sigma(i)}}= d_{s_{\sigma(n)}}>\cdots>0}}sgn(\sigma)\prod_{i=1}^{n}\widetilde{S}_{d_{s_{\sigma(i)}}}(s_{\sigma(i)})\\
&=&\sum_{\substack{1\leq i\leq n-1\\ i \ odd}}(1-1)\sum_{\substack{\sigma\in S_n\\ d_{s_{\sigma(1)}}>\cdots>d_{s_{\sigma(n)}}> 0}}sgn(\sigma)\prod_{i=1}^n\widetilde{S}_{d_{s_{\sigma(i)}}}(s_{\sigma(i)})-{\sum}_2\\
&=&-{\sum}_2\\
&=&-\sum_{i=1}^{n-1}\sum_{\substack{\sigma\in S_{n}\\ d_{s_{\sigma(1)}}>\cdots>d_{s_{\sigma(i)}}= d_{s_{\sigma(n)}}>\cdots> d_{s_{\sigma(n-1)}}>0\\ \sigma(i)<\sigma(n)}}sgn(\sigma)\prod_{i=1}^{n}\widetilde{S}_{d_{s_{\sigma(i)}}}(s_{\sigma(i)})\\
&-&\sum_{i=1}^{n-1}\sum_{\substack{\sigma\in S_{n}\\ d_{s_{\sigma(1)}}>\cdots>d_{s_{\sigma(n)}}= d_{s_{\sigma(i)}}>\cdots >d_{s_{\sigma(n-1)}}>0\\ \sigma(i)<\sigma(n)}}sgn((i,n))sgn(\sigma)\prod_{i=1}^{n}\widetilde{S}_{d_{s_{\sigma(i)}}}(s_{\sigma(i)})\\
&=&0.
\end{eqnarray*}

Each time we exchange $\sigma(n)$ and $\sigma(i),\cdots,\sigma(n-1)$ for every $\sigma\in S_n$, $sgn(\sigma)$ will be multiplied by $-1$, which implies the first equality. 
The second equality follows by perfectly grouping the consecutive terms $i,i+1$, (it can be done since $n$ is odd hence $n-1$ is even). 
The fourth equality holds since the ${\sum}_2$ breaks up to $\sigma(i)<\sigma(n)$ and $\sigma(i)>\sigma(n)$, which can be expressed as in there because we can always exchange $i,n$ by one transposition $(i,n)$.
\end{proof}

\begin{Th}\label{Thm3}
$K$ is a function field over $\mathbb{F}_q$ with a rational infinity place. If $\mathrm{char}(\mathbb{F}_q)=2$, $v$ a prime, $n\geq 1$, $s_i$ and $2s_i$ are all distinct $($\textit{where $2s_i$ is included only when $k_i>1$}$)$, $q-$even and $k_i\geq 1$, $\forall i\leq n$. Let $\phi:=\sum_{i=1}^nk_i$, and
$\mathbf{s}_0:=((s_1)_{k_1},\cdots,(s_n)_{k_n}).$

For $i$ such that $k_i> 1$, we define
$\mathbf{s}_i:=((s_1)_{k_1},\cdots,(s_{i})_{k_i-2},2s_i,\cdots,(s_n)_{k_n}).$
Then the following holds:
\begin{eqnarray*}
\sum_{{\textit{$i$ where $k_i>1$}}}\left(\sum_{\sigma\in \textbf{L}_i}\zeta_v(\sigma(\mathbf{s}_i))\right)+\phi\sum_{\sigma\in \textbf{L}_0}\zeta_v(\sigma(\mathbf{s}_0))=0,
\end{eqnarray*}
where $\textbf{L}_i$ is the set consisting of all possible re-orders of $\mathbf{s}_i$.
Here $(a)_j$ means a list of $j$ copies of $a$.
\end{Th}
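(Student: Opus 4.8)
The plan is to mirror the telescoping argument of \Cref{Thm2}, replacing the alternating (signed) sum by the symmetric sum over distinct re-orderings and feeding in a characteristic-two squaring identity. Three elementary facts drive everything. First, since each $s_i$ and each $2s_i$ is $q$-even, the depth-one vanishing recalled just above gives $\zeta_v(s_i)=\zeta_v(2s_i)=0$, so that $\widetilde{S}_0(s)=\sum_{d\ge 1}\widetilde{S}_d(s)$ for $s\in\{s_i,2s_i\}$, exactly as in \eqref{E1}; note the minus sign there disappears in characteristic two. Second, because $\mathrm{char}(\mathbb{F}_q)=2$ we have the Frobenius collapse $\widetilde{S}_d(s)^2=\sum_{a\in A_{d+},\,(a,v)=1}a^{-2s}=\widetilde{S}_d(2s)$, which is precisely why the merged letter $2s_i$ enters the statement. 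Third, and structurally most important: the symmetrisation factor $\prod_i k_i!$ relating $\sum_{\sigma\in S_\phi}$ to $\sum_{\sigma\in\mathbf{L}_0}$ is even as soon as some $k_i\ge 2$, hence vanishes in $\mathbb{F}_q$; so I cannot pass through the full symmetric group as one might in characteristic zero and must run the entire argument directly on the sets of distinct re-orderings $\mathbf{L}_0,\mathbf{L}_i$.

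Concretely, I would form the target combination $\Xi:=\sum_{i:\,k_i>1}\sum_{\sigma\in\mathbf{L}_i}\zeta_v(\sigma(\mathbf{s}_i))+\phi\sum_{\sigma\in\mathbf{L}_0}\zeta_v(\sigma(\mathbf{s}_0))$ and telescope it as in \Cref{Thm2}. For each re-ordering I split its nested sum according to whether the lowest degree is $0$ or positive, rewrite the lowest-degree-zero slice through $\widetilde{S}_0(w)=\sum_{d\ge1}\widetilde{S}_d(w)$, and re-insert the freed index $d$ into the remaining strictly decreasing chain. Re-insertion splits into strict placements and ties. A strict placement returns a fully positive strictly decreasing chain on the same multiset, i.e. a term of $\sum_{\mathbf{L}_0}\zeta_v$; counting the slots available to the re-inserted index is what must reproduce the coefficient $\phi$ and settle it against the leading $\phi\sum_{\mathbf{L}_0}$ term. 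A tie forces two entries to share a degree: if the tied entries carry the same value $s_i$, the squaring identity converts $\widetilde{S}_d(s_i)^2$ into $\widetilde{S}_d(2s_i)$ and rebuilds exactly the re-orderings counted by $\mathbf{L}_i$; if they carry distinct values, or if three or more indices collide, one instead produces a genuinely new \emph{diagonal} harmonic term such as $\sum_d\widetilde{S}_d(s)\widetilde{S}_d(s')$ that is not of MZV shape.

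The hard part is precisely the fate of those diagonal terms. They already appear if one naively expands $\prod_i\zeta_v(s_i)^{k_i}=0$, so they cannot simply be discarded; the claim implicit in the theorem is that in the telescoped combination $\Xi$ every unequal-value tie and every higher-order collision occurs with even multiplicity and hence vanishes in characteristic two, leaving only the equal-value ties (the $\mathbf{s}_i$ terms) and the strict placements (the $\phi\,\mathbf{s}_0$ term). To establish this I would pair each diagonal configuration with its partner obtained by transposing the two tied slots — the analogue of the transposition $(i,n)$ used at the end of the proof of \Cref{Thm2} — and check that, the sign now being trivial, the two copies add to $2\times(\cdots)=0$, whereas an equal-value tie is \emph{fixed} by that transposition and so survives with coefficient one. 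The residual bookkeeping — that the strict placements assemble with exactly total coefficient $\phi$ and that the surviving equal-value ties assemble into $\sum_{\sigma\in\mathbf{L}_i}\zeta_v(\sigma(\mathbf{s}_i))$ with no over- or under-counting — is routine once the cancellation of the diagonal terms is secured, and yields $\Xi=0$. I expect the cancellation of the diagonal and unequal-value tie terms, together with tracking multiplicities correctly while working over $\mathbf{L}$ rather than $S_\phi$, to be the genuine obstacle.
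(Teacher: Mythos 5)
Your overall strategy is genuinely the paper's own: depth-one vanishing to telescope away the degree-zero entry, the Frobenius identity $\widetilde{S}_d(s)^2=\widetilde{S}_d(2s)$ to convert equal-value ties into the doubled letter, characteristic-two pairing to kill cross terms, and a count of strict re-insertions to produce the coefficient $\phi$. Your involution (swap the two tied letters: an unequal-value tie arises from two distinct words and so cancels, an equal-value tie is a fixed point and survives once) is exactly the unsigned analogue of the transposition $(i,n)$ used at the end of the proof of \Cref{Thm2}, and it does dispose of all unequal-value ties.

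However, there is a genuine hole in your central dichotomy ``equal-value tie $\Rightarrow$ rebuilds an $\mathbf{L}_i$ term; anything else $\Rightarrow$ cancels by the transposition.'' Since your $\Xi$ contains the sums over $\mathbf{L}_i$, you must also telescope chains that already carry the letter $2s_i$. There, an equal-value tie --- the freed $s_j$ re-inserted onto another copy of $s_j$ --- produces, after squaring, a chain on a multiset with \emph{two} doubled letters: both $2s_i$ and $2s_j$ when $j\neq i$, or $2s_i$ twice when $j=i$ and $k_i\geq 4$. Such a term is a fixed point of your involution, yet it is not a re-ordering of any $\mathbf{s}_l$, so your sketch accounts for it nowhere; and the branch in which the freed letter is $2s_i$ itself is likewise missing from your strict-placement count. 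These extra terms do cancel, but by different pairings: each $(2s_i,2s_j)$ configuration arises once from telescoping $\mathbf{L}_i$ and once from telescoping $\mathbf{L}_j$, and each $(2s_i,2s_i)$ configuration arises twice within $\mathbf{L}_i$ according to which doubled entry is regarded as the ``original'' one. The paper avoids this issue entirely by applying the Frobenius identity in the opposite direction at the outset: it expands each $2s_i$ into a tie between two copies of $s_i$, so that every configuration in the whole proof lives on the single multiset $\mathbf{s}_0$ with tie constraints; the cross terms are then matched explicitly across the three branches determined by which entry sits at degree zero, yielding ${\sum}_2=(\phi+1){\sum}_1$, hence $\sum_{\sigma\in\mathbf{L}_i}\zeta_v(\sigma(\mathbf{s}_i))=\phi\,{\sum}_1$, and finally $\phi\sum_i{\sum}_1=\phi\sum_{\sigma\in\mathbf{L}_0}\zeta_v(\sigma(\mathbf{s}_0))$. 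Either repair works --- adopt the paper's Frobenius-first bookkeeping, or add the two cross-multiset pairings above and fold the freed-$2s_i$ branch into your count --- but as written your cancellation argument does not close.
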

\begin{proof} 

From now on, we will use the following notation 
\begin{eqnarray*}
\sum_{A}&:=&\sum_{\substack{\forall j\neq i\ d_{s_j,1}>\cdots>d_{s_j,k_j}>0\\ d_{s_i,1}>\cdots>d_{s_i,k_i-2}>0\\  \forall(r,e), (t,m)\neq (i,k_i-1),(i,k_i),\ d_{s_r,e}\neq d_{s_t,m}}},\ \sum_{B}:=\sum_{\substack{\forall j\neq i\ d_{s_j,1}>\cdots>d_{s_j,k_j}>0\\ d_{s_i,1}>\cdots>d_{s_i,k_i-1}>0\\ d_{s_a,1}>\cdots>d_{s_a,k_a-1}>0\\ \forall(r,e), (t,m)\neq (i,k_i),(a,k_a),\ d_{s_r,e}\neq d_{s_t,m}}}\\
\sum_{C}&:=&\sum_{\substack{\forall j\neq i\ d_{s_j,1}>\cdots>d_{s_j,k_j}>0\\ d_{s_i,1}>\cdots>d_{s_i,k_i-1}>0\\  \forall(r,e), (t,m)\neq (i,k_i),\ d_{s_r,e}\neq d_{s_t,m}}}\\
\prod&:=&\prod_{j=1}^n\prod_{1\leq l\leq  k_j}\widetilde{S}_{d_{s_j,l}}(s_j),\ \ \ \  {\prod}_i:=\prod_{j\neq i}\prod_{1\leq l\leq k_j}\widetilde{S}_{d_{s_j,l}}(s_j)
\end{eqnarray*}
{\bf{We remark here that when $k_i=2$, the condition $d_{s_i,1}>\cdots>d_{s_i,k_i-2}>0$ from the inner summation should be dropped. From now on, if some line or summation should be dropped because of such kind of issue, we no longer point it out.}}
Consider $i$, such that $k_i>1$. Then we have
\begin{eqnarray*}
\sum_{\sigma\in \textbf{L}_i}\zeta_v(\sigma(\mathbf{s}_i))&=&\sum_{\substack{\forall j\neq i\  d_{s_j,1}>\cdots>d_{s_j,k_j}\geq 0\\ d_{s_i,1}>\cdots>d_{s_i,k_i-1}\geq 0\\\forall(r,e), (t,m)\neq (i,k_i),\ d_{s_r,e}\neq d_{s_t,m}\\ \exists l<k_i\ d_{s_i,l}=d_{s_i,k_i}}}\prod\\
&=&\sum_{\substack{C\\ \exists l<k_i\ d_{s_i,l}=d_{s_i,k_i}}}\prod+\sum_{\substack{\forall j\neq i\  d_{s_j,1}>\cdots>d_{s_j,k_j}\geq 0\\ d_{s_i,1}>\cdots>d_{s_i,k_i-1}\geq 0\\\forall(r,e), (t,m)\neq (i,k_i),\ d_{s_r,e}\neq d_{s_t,m}\\ \exists l<k_i\ d_{s_i,l}=d_{s_i,k_i}\\ \exists a\neq i\ d_{s_a,k_a}=0,\ or\ d_{s_i,k_i-1}=0}}\prod\\
&=&{\sum}_1+{\sum}_2
\end{eqnarray*}

The first equality follows by definition using $S_d(2s_i)=S_d(s_i)^2$ leading to two $d$'s corresponding to $\ell$ and $k_i$, and the second equality follows by distributing the summation into two parts $d_{s_r,k_r}>0,\forall r$; and $\exists a, d_{s_a,k_a}=0$. 

Now we study the second summation first.

We break ${\sum}_2$ to three cases $d_{s_i,k_i-1}=0, d_{s_i,k_i}\neq 0$; $d_{s_i,k_i-1}=d_{s_i,k_i}=0$; and $\exists a\neq i, d_{s_a,k_a}=0$. And applying  $\widetilde{S}_0(s)=\sum_{d=1}^{\infty}\widetilde{S}_d(s)$ in characteristic $2$ for $q-$even $s$, we have the following equality
\begin{eqnarray}
{\sum}_2&=&\sum_{\substack{A\\ \exists l<k_i-1\ d_{s_i,l}=d_{s_i,k_i}}}{\prod}_i\left(\prod_{l\neq k_i-1}\widetilde{S}_{d_{s_i,l}}(s_i)\right)\sum_{d_{s_i,k_i-1}=1}^{\infty}\widetilde{S}_{d_{s_i,k_i-1}}(s_i)\label{E1}\\
&+&\sum_{\substack{A}}{\prod}_i\left(\prod_{l<k_i-1}\widetilde{S}_{d_{s_i,l}}(s_i)\right)\left(\sum_{d_{s_i,k_i}=1}^{\infty}\widetilde{S}_{d_{s_i,k_i}}(2s_i)\right)\label{E2}\\
&+&\sum_{\substack{B\\ \exists l<k_i\ d_{s_i,l}=d_{s_i,k_i}}}{\prod}_a\left(\prod_{h<k_a}\widetilde{S}_{d_{s_a,h}}(s_a)\right)\left(\sum_{d_{s_a,k_a}=1}^{\infty}\widetilde{S}_{d_{s_a,k_a}}(s_a)\right)\label{E3}
\end{eqnarray}

Calculating the above three summations by distributing into cases, we have
\begin{eqnarray*}
(\ref{E1})&=&\sum_{\substack{A\\
\exists l<k_i-1,\ d_{s_i,l}=d_{s_i,k_i}\\ \forall (s_r,w)\neq (s_i,k_i-1)\ d_{s_r,w}\neq d_{s_i,k_i-1}>0}}\prod+\sum_{\substack{A\\
\exists l<k_i-1,\ d_{s_i,k_i}=d_{s_i,l}\\ \exists w\neq l,k_i,\ d_{s_i,w}=d_{s_i,k_i-1}>0}}\prod\\
&+&\sum_{\substack{A\\
\exists l<k_i-1 \ d_{s_i,l}=d_{s_i,k_i-1}=d_{s_i,k_i}>0}}\prod+\sum_{\substack{A\\ \exists l<k_i-1,\ d_{s_i,k_i}=d_{s_i,l}\\
\exists a\neq i,\exists w, \ d_{s_a,w}=d_{s_i,k_i-1}>0}}\prod
\end{eqnarray*}

This equality holds by distributing (\ref{E1}) into four cases, where the typical term corresponding to
$d_{s_i,k_i-1}$ in the last summation:
(I) doesn't agree with any other earlier $d$'s or (II) agrees with some $d_{s_i,w}$ where $w\neq l,k_i$ or (III) agrees with $d_{s_i,l}=d_{s_i,k_i}$ or (IV) agrees with $d_{s_a,w}$ for some $a\neq i$.
The second sum in $(\ref{E1})$ is $0$ because  $d_{s_i,k_i}=d_{s_i,l}, d_{s_i,k_i-1}=d_{s_i,w}$ and $d_{s_i,k_i}=d_{s_i,w}, d_{s_i,k_i-1}=d_{s_i,l}$ will give you same terms since $d_{s_i,k_i}, d_{s_i,k_i-1}$ are unfixed.

Similar distributions give you (\ref{E2}) and (\ref{E3}), where in (\ref{E2}) we again decompose $\widetilde{S}_{d_{s_i,k_i}}(2s_i)=\widetilde{S}_{d_{s_i,k_i}}(s_i)\widetilde{S}_{d_{s_i,k_i-1}}(s_i)$.
\begin{eqnarray*}
(\ref{E2})&=&\sum_{\substack{A\\ d_{s_i,k_i}=d_{s_i,k_i-1}\\ d_{s_r,w}\neq d_{s_i,k_i}}}\prod+\sum_{\substack{A\\ \exists l<k_i-1,\ d_{s_i,l}=d_{s_i,k_i}=d_{s_i,k_i-1}}}\prod\\
&+&\sum_{\substack{A\\
\exists a\neq i, \exists w,\ d_{s_a,w}=d_{s_i,k_i-1}=d_{s_i,k_i}}}\prod\\
(\ref{E3})&=&\sum_{\substack{A\\ \exists l<k_i,\ d_{s_i,k_i}=d_{s_i,l}\\ \forall (s_r,w)\neq (s_a,k_a),\ d_{s_r,w}\neq d_{s_a,k_a}}}\prod+\sum_{\substack{B\\ \exists l<k_i,\ d_{s_i,k_i}=d_{s_i,l}\\ \exists a\neq i, \exists w\neq l,k_i,\  d_{s_a,k_a}=d_{s_i,w}}}\prod\\
&+&\sum_{\substack{B\\ \exists l<k_i,\ d_{s_i,k_i}=d_{s_i,l}=d_{s_a,k_a}}}\prod+\sum_{\substack{B\\ \exists l<k_i,\ d_{s_i,k_i}=d_{s_i,l}\\ \exists b\neq i,a, \exists w,\  d_{s_a,k_a}=d_{s_b,w}}}\prod
\end{eqnarray*}

The fourth sum in $(\ref{E3})$ is $0$, since for each term corresponding to $d_{s_a,k_a}=d_{s_b,w}$, there exists $m$ such that $d_{s_b,k_b}=d_{s_a,m}$ giving rise to the same term. 

We now notice that the third sum in $(\ref{E1})$ is equal to the second sum in $(\ref{E2})$ because of the hypothesis of distinctness three $d$'s can not be $0$ simultaneously. 

The fourth sum in $(\ref{E1})$ is equal to the second sum in $(\ref{E3})$ because for each term in (\ref{E3}) with $d_{s_i,w}=d_{s_a,k_a}$, we can exchange $w$ with $k_i-1$, so this term is transformed to the term with $d_{s_a,k_a}=d_{s_i,k_i-1}$ and we can re-order $(1,\cdots,w-1,k_i-1,w+1\cdots,k_i-2,w)$ by $(1,\cdots,w-1,k_i-1,w,w+1,\cdots,k_i-2)$. If we further ask such term has $d_{s_i,k_i}=d_{s_i,l}\neq d_{s_i,k_i-1}$, then we get $l<k_i-1$. Now if $d_{s_a,w-1}>d_{s_a,k_a}>d_{s_a,w}$ for some $w$, then exchanging $k_a$ with $w$, and after re-ordering $(1,\cdots,w-1,k_a,w+1,\cdots,k_a-1,w)$ by $(1,\cdots,w-1,k_a,w,\cdots,k_a-1)$, terms in second sum in ({\ref{E3}}) match with terms in fourth sum in (\ref{E1}). 

The third sum in $(\ref{E2})$ and $(\ref{E3})$ are also equal because for each term in (\ref{E3}) with $d_{s_i,k_i}=d_{s_i,l}=d_{s_a,k_a}$, we can exchange $l$ with $k_i-1$, and re-ordering $(1,\cdots,l-1,k_i-1,l+1\cdots,k_i-2,l)$ by $(1,\cdots,l-1,k_i-1,l,l+1,\cdots,k_i-2)$. Then, we exchange $k_a$ with $w$, and after re-ordering $(1,\cdots,w-1,k_a,w+1,\cdots,k_a-1,w)$ by $(1,\cdots,w-1,k_a,w,\cdots,k_a-1)$, terms in third sum in ({\ref{E3}}) match with terms in third sum in (\ref{E1}).

Then, we sum up and get 
\begin{eqnarray*}
{\sum}_2&=&\sum_{\substack{A\\
\exists l<k_i-1,\ d_{s_i,l}=d_{s_i,k_i}\\ \forall (s_r,w)\neq (s_i,k_i-1),\ d_{s_r,w}\neq d_{s_i,k_i-1}>0}}\prod+\sum_{\substack{A\\ d_{s_i,k_i}=d_{s_i,k_i-1}>0\\ \forall (s_r,w)\neq (s_i,k_i-1),\ d_{s_r,w}\neq d_{s_i,k_i-1}}}\prod\\
&+&\sum_{\substack{B\\ \exists l<k_i,\ d_{s_i,k_i}=d_{s_i,l}\\ \forall (s_r,w)\neq (s_a,k_a),\ d_{s_r,w}\neq d_{s_a,k_a}}}\prod\\
&=&\left(k_i-2\right)\sum_{\substack{C\\
\exists l<k_i-1,\ d_{s_i,k_l}=d_{s_i,k_i}}}\prod+\sum_{\substack{C\\ \exists l<k_1,\ d_{s_i,k_i}=d_{s_i,k_l}>0}}\prod\\
&+&\left(\sum_{j\neq i}k_j\right)\sum_{\substack{C\\ \exists l<k_i,\ d_{s_i,k_i}=d_{s_i,l}}}\prod\\
&=&\left(1+\sum_{j=1}^n(k_j-2)\right){\sum}_1=(\phi+1){\sum}_1
\end{eqnarray*}

For all $l\neq r<k_i-1$ and $w<k_a$, if we re-order $(1,\cdots,\hat{l},\cdots,r,k_i-1,r+1,\cdots,k_i-2)$, $(1,\cdots,w,k_a,w+1,\cdots,k_a-1)$ by $(1,\cdots,\hat{l},\cdots,k_i-1)$ and $(1,\cdots,k_a)$ respectively, we can get that the first sum and third sum after the first equality sign have $k_i-2$ and $\sum_{j\neq i}k_j$ copies of each term in ${\sum}_1$ respectively. (Here $
\hat{l}$ means we omit the $l$, since $d_{s_i,l}=d_{s_i,k_i}$) And we can directly see that the second sum is one copy of ${\sum}_1$. Then, the second equality holds. The last equality follows because $\sum_{i=1}^n(k_i-2)=\phi-2n=\phi$ in characteristic $2$.

Hence, we sum up for all $i$'s we get 
\begin{equation}
\sum_{i,\ where\ k_i>1}\left(\sum_{\sigma\in \textbf{L}_i}\zeta_v(\sigma(\mathbf{s}_i))\right)=\phi\sum_{i,\ where\ k_i>1}{\sum}_1\label{E4}
\end{equation}

Now we study the ${\sum}_1$ term carefully.
\begin{eqnarray*}
{\sum}_1&=&\sum_{C}{\prod}_i\left(\prod_{l\neq k_i}\widetilde{S}_{d_{s_i,l}}(s_i)\right)\left(\sum_{d_{s_i,k_i}=0}^{\infty}\widetilde{S}_{d_{s_i,k_i}}(s_i)\right)\\
&-&\sum_{\substack{C\\d_{s_r,e}\neq d_{s_i,k_i}\geq 0}}\prod-\sum_{\substack{C\\ \exists a\neq i, \exists h,\ d_{s_a,h}=d_{s_i,k_i}}}\prod\\
&=&\sum_{\substack{C\\ d_{s_r,e}\neq d_{s_i,k_i}\\ d_{s_i,k_i}>0}}\prod+\sum_{\substack{C\\ d_{s_r,e}\neq d_{s_i,k_i}\\ d_{s_i,k_i}=0}}\prod+{\sum}_3\\
&=&k_i\sum_{\substack{d_{s_j,1}>\cdots>d_{s_j,k_j}> 0\\  d_{s_r,e}\neq d_{s_t,m}}}\prod+\sum_{\substack{\forall j\neq i\ d_{s_j,1}>\cdots>d_{s_j,k_j}> 0\\ d_{s_i,1}>\cdots>d_{s_i,k_i}=0\\  d_{s_r,e}\neq d_{s_t,m}}}\prod+{\sum}_3
\end{eqnarray*}

The first equality is by just distributing the product into three cases similar as before. The second equality holds from the fact that $\sum_{d=0}^{\infty}\widetilde{S}_d(s)=0$ in characteristic $2$ for $q-$even $s$ and distributing the second sum into two cases: $d_{s_i,k_i}> 0$ and $d_{s_i,k_i}=0$.

Since we can re-order $(1,\cdots,l-1,k_i,l,\cdots,k_i-1)$ to $(1,\cdots,k_i-1)$ in the first sum for all $l\leq k_i$. Hence, we get $k_i$ copies of the first sum after the last equality sign from the first sum and we have $d_{s_i,k_i}=0$ is obviously the least. Hence, the last equality holds.

We notice that in characteristic $2$, for $i$ such that $k_i=1$, if it exists, we have $\sum_{d_{s_i,1}=0}^{\infty}\widetilde{S}_{d_{s_i,1}}(s_i)=0$ for $s_i$ $q-$even, and
\begin{eqnarray*}
0&=&\sum_{\substack{\forall j\neq u\ d_{s_j,1}>\cdots>d_{s_j,k_j}>0\\ d_{s_r,e}\neq d_{s_t,m}}}{\prod}_u\left(\sum_{d_{s_i,1}=0}^{\infty}\widetilde{S}_{d_{s_i,1}}(s_u)\right)\\
&=&\sum_{\substack{\forall j\neq i\ d_{s_j,1}>\cdots>d_{s_j,k_j}> 0\\\forall r,t\neq i,\ d_{s_r,e}\neq d_{s_t,m}\\ \exists a\neq i,\exists h,\ d_{s_a,h}=d_{s_i,1}}}\prod+\sum_{\substack{\forall j\neq i\  d_{s_j,1}>\cdots>d_{s_j,k_j}> 0\\ d_{s_i,1}=0\\ d_{s_r,e}\neq d_{s_t,m}}}\prod+\sum_{\substack{d_{s_j,1}>\cdots>d_{s_j,k_j}> 0\\ d_{s_r,e}\neq d_{s_t,m}}}\prod
\end{eqnarray*}

Hence, when we sum expression above for ${\sum}_1$ over $i$ where $k_i>1$ and the expression for $0$ above over $i$ where $k_i=1$, we get
\begin{eqnarray*}
&&\sum_{i,\ where\ k_i>1}{\sum}_1\\
&=&\sum_{i,\ where\ k_i>1}{\sum}_1+\sum_{i,\ where\ k_i=1}0\\
&=&\sum_{i,\ where\ k_i>1}\sum_{\substack{\forall j\neq i\  d_{s_j,1}>\cdots>d_{s_j,k_j}> 0\\ d_{s_i,1}>\cdots>d_{s_i,k_i}= 0\\d_{s_r,e}\neq d_{s_t,m}}}\prod+\sum_{i,\ where\ k_i=1}\sum_{\substack{\forall j\neq i\  d_{s_j,1}>\cdots>d_{s_j,k_j}> 0\\ d_{s_i,1}=0\\ d_{s_r,e}\neq d_{s_t,m}}}\prod
\end{eqnarray*}
\begin{eqnarray*}
&+&\sum_{\substack{d_{s_j,1}>\cdots>d_{s_j,k_j}> 0\\  d_{s_r,e}\neq d_{s_t,m}}}\prod+\left(-1+\sum_{i,\ where\ k_i>1}k_i\right)\sum_{\substack{d_{s_j,1}>\cdots>d_{s_j,k_j}> 0\\  d_{s_r,e}\neq d_{s_t,m}}}\prod\\
&+&\sum_{i,\ where\ k_i=1}\sum_{\substack{ d_{s_j,1}>\cdots>d_{s_j,k_j}> 0\\ d_{s_r,e}\neq d_{s_t,m}}}\prod+\sum_{i,\ where\ k_i>1}{\sum}_3\\
&+&\sum_{i,\ where\ k_i=1}\sum_{\substack{\forall j\neq i\  d_{s_j,1}>\cdots>d_{s_j,k_j}> 0\\ \forall r,t\neq i\ d_{s_r,e}\neq d_{s_t,m}\\ \exists a\neq i,\exists h \ d_{s_a,h}=d_{s_i,1}}}\prod
\end{eqnarray*}

(For convenience, we have re-organized the terms so that the second, fifth, and seventh sum are coming from the second, third, and first sum in $0$ expression respectively; the first, sixth sum are derived from the second and last sum in ${\sum}_1$ respectively; the third and fourth sum are summed over the first sum in ${\sum}_1$ and also decompose $\phi=1+(\phi-1$).)

Summing the the first and second sum together, we get the first sum below. Summing the fourth and fifth sum together, we get the third sum below. After distributing the remaining sums into two cases $k_a>1$ and $k_a=1$ accordingly, we have
\begin{eqnarray*}
\sum_{i,\ where\ k_i>1}{\sum}_1&=&\sum_{i=1}^n\sum_{\substack{\forall j\neq i\  d_{s_j,1}>\cdots>d_{s_j,k_j}> 0\\ d_{s_i,1}>\cdots>d_{s_i,k_i}= 0\\d_{s_r,e}\neq d_{s_t,m}}}\prod+\sum_{\substack{d_{s_j,1}>\cdots>d_{s_j,k_j}> 0\\  d_{s_r,e}\neq d_{s_t,m}}}\prod\\
&+&\left(\left(\sum_{i=1}^nk_i\right)-1\right)\sum_{\substack{d_{s_j,1}>\cdots>d_{s_j,k_j}> 0\\  d_{s_r,e}\neq d_{s_t,m}}}\prod\\
&+&\sum_{i,\ where\ k_i>1}\sum_{\substack{C\\ \exists a\neq i,\ where\ k_a>1,\\ \exists h, \ d_{s_a,h}=d_{s_i,k_i}}}\prod+\sum_{i,\ where\ k_i>1}\sum_{\substack{C\\ \exists a\neq i,\ where\ k_a=1,\\  d_{s_a,1}=d_{s_i,k_i}}}\prod\\
&+&\sum_{i,\ where\ k_i=1}\sum_{\substack{\forall j\neq i\  d_{s_j,1}>\cdots>d_{s_j,k_j}> 0\\ \forall r,t\neq i\ d_{s_r,e}\neq d_{s_t,m}\\ \exists a\neq i,\ where \ k_a>1, \\ \exists h,\ d_{s_a,h}=d_{s_i,1}}}\prod+\sum_{i,\ where\ k_i=1}\sum_{\substack{\forall j\neq i\  d_{s_j,1}>\cdots>d_{s_j,k_j}> 0\\ \forall r,t\neq i\ d_{s_r,e}\neq d_{s_t,m}\\ \exists a\neq i,\ where \ k_a=1,\\ d_{s_a,1}=d_{s_i,1}}}\prod\\
&=&\sum_{\sigma\in \textbf{L}_0}\zeta_v(\sigma(\mathbf{s}_0))+(\phi-1)\sum_{\substack{d_{s_j,1}>\cdots>d_{s_j,k_j}> 0\\  d_{s_r,e}\neq d_{s_t,m}}}\prod\\
&+&\sum_{i,\ where\ k_i>1}\sum_{\substack{C\\ \exists a\neq i,\ where\ k_a=1,\\  d_{s_a,1}=d_{s_i,k_i}}}\prod+\sum_{i,\ where\ k_i=1}\sum_{\substack{\forall j\neq i\  d_{s_j,1}>\cdots>d_{s_j,k_j}> 0\\ \forall r,t\neq i\ d_{s_r,e}\neq d_{s_t,m}\\ \exists a\neq i,\ where \ k_a>1, \\ \exists h,\ d_{s_a,h}=d_{s_i,1}}}\prod\\
&=&\sum_{\sigma\in \textbf{L}_0}\zeta_v(\sigma(\mathbf{s}_0))+(\phi-1)\sum_{\substack{d_{s_j,1}>\cdots>d_{s_j,k_j}> 0\\  d_{s_r,e}\neq d_{s_t,m}}}\prod+{\sum}_3+{\sum}_4.
\end{eqnarray*}

The addition of the first two sums equals to $\sum_{\sigma\in \textbf{L}_0}\zeta_v(\sigma(\mathbf{s}_0))$ by definition, and we have $\sum_{i=1}^nk_i=\phi$. After exchanging $a,i$ in expression of the fourth sum (note that $k_i$ is without restriction) and exchanging $a,i$ in the seventh sum, we get $2$ copies of each term in these two sum accordingly, hence they both vanish in characteristic $2$.  This gives us the second equality.

Noting from conditions in ${\sum}_3$ and ${\sum}_4$ that all $d$'s we get in the expressions below are greater $0$, we have

\begin{eqnarray*}
{\sum}_3&=&\sum_{i,\ where\ k_i>1}\sum_{a\neq i,\ where\ k_a=1}\sum_{\substack{\forall j\neq a,i\  d_{s_j,1}>\cdots>d_{s_j,k_j}> 0\\ d_{s_a,1}>0\\ d_{s_i,1}>\cdots>d_{s_i,k_i}>0\\ \forall r,t\neq a\ d_{s_r,e}\neq d_{s_t,m}\\ \exists h,\ d_{s_i,h}=d_{s_a,1}}}\prod,\\
{\sum}_4&=&\sum_{i,\ where\ k_i=1}\sum_{a\neq i,\ where\ k_a>1}\sum_{\substack{\forall j\neq i,a\  d_{s_j,1}>\cdots>d_{s_j,k_j}> 0\\ d_{s_i,1}>0\\ d_{s_a,1}>\cdots>d_{s_a,k_a}>0\\ \forall r,t\neq i\ d_{s_r,e}\neq d_{s_t,m}\\ \exists h,\ d_{s_a,h}=d_{s_i,1}}}\prod.
\end{eqnarray*}

Hence, they are equal to each other once we exchange $i,a$, so ${\sum}_3+{\sum}_4=0$.

Now since $\phi(\phi+1)=0$ in characteristic $2$, we have 
\begin{eqnarray*}
\phi\sum_{i,\ where\ k_i>1}{\sum}_1&=&\phi\sum_{\sigma\in \textbf{L}_0}\zeta_v(\sigma(\mathbf{s}_0))+\phi(\phi-1)\sum_{\substack{d_{s_j,1}>\cdots>d_{s_j,k_j}> 0\\  d_{s_r,e}\neq d_{s_t,m}}}\prod\\
&=&\phi\sum_{\sigma\in \textbf{L}_0}\zeta_v(\sigma(\mathbf{s}_0))
\end{eqnarray*}

Hence, by $(\ref{E4})$, we have
\begin{eqnarray*}
\sum_{{\textit{$i$ where $k_i>1$}}}\left(\sum_{\sigma\in \textbf{L}_i}\zeta_v(\sigma(\mathbf{s}_i))\right)+\phi\sum_{\sigma\in \textbf{L}_0}\zeta_v(\sigma(\mathbf{s}_0))=0.
\end{eqnarray*}

If the assumption fails, i.e. $k_i=1$ for all $i\leq n$, then \Cref{Thm3} can either be reduced to \Cref{Thm2} when $\phi$ is odd, or simply we get $0=0$ when $\phi$ even.
\end{proof}

\begin{Rem}
In the special case where $k_i=1$ for all $i$, we have \Cref{Thm2} as already pointed out in the above proof. Hence, \Cref{Thm3} is a generalization of \Cref{Thm2} in characteristic $2$.
\end{Rem}

We can easily apply the above two results to the finite MZVs and get the following results once we realize the fact that $Z(s)=0$ if $s$ is $q-$even.
\begin{Cor}
$K$ be any given function field, $\mathbf{s}=(s_i)_{i=1}^n$, if $s_i\in \mathbb{Z}$ are distinct and $q-$even and $n$ is odd, then we have:
\begin{eqnarray*}
\sum_{\sigma\in S_n}sgn(\sigma)\zeta^{\mathcal{A}}(\sigma(\mathbf{s}))=0
\end{eqnarray*}
where $S_n$ is the symmetry group and sgn is the sign function on $S_n$.
\end{Cor}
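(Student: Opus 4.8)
The plan is to transport the argument of \Cref{Thm2} to the finite setting, performing the same manipulation modulo each prime $v$ and then reading off the conclusion in $\mathcal{A}$. The one new input is the depth-one vanishing: for a nonzero $q$-even $s$ one has $Z(s)=\zeta^{\mathcal{A}}(s)=0$ in $\mathcal{A}$. I would first record why this holds. For a prime $v$ of degree $D$, every monic $a$ with $\deg a<D$ is coprime to $v$, so $S_d(s)=\widetilde{S}_d(s)$ for $d<D$ and $\zeta_v^{\mathcal{A}}(s)=\sum_{a\ \mathrm{monic},\,\deg a<D}a^{-s}\bmod v$. Writing each nonzero residue in $\mathbb{F}_v=\mathbb{F}_{q^D}$ uniquely as $c\cdot a$ with $c\in\mathbb{F}_q^{\times}$ and $a$ monic of degree $<D$, the identity $\sum_{c\in\mathbb{F}_q^{\times}}c^{-s}=-1$ (valid since $(q-1)\mid s$) gives $\zeta_v^{\mathcal{A}}(s)=-\sum_{x\in\mathbb{F}_{q^D}^{\times}}x^{-s}$, which is $0\bmod v$ unless $(q^D-1)\mid s$. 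As $(q^D-1)\mid s$ forces $q^D\le |s|+1$, only finitely many $v$ are exceptional, so $\zeta_v^{\mathcal{A}}(s)\equiv 0\pmod v$ for all but finitely many $v$; that is exactly $Z(s)=0$ in $\mathcal{A}$.

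Next I would fix a prime $v$ outside the finite bad set $\{v:(q^{\deg v}-1)\mid s_i\text{ for some }i\}$ and work modulo $v$. For such a $v$ the depth-one identity reads $S_0(s_i)\equiv-\sum_{d=1}^{\deg v-1}S_d(s_i)\pmod v$, the exact finite analogue of the relation $\widetilde{S}_0(s_i)=-\sum_{d\ge1}\widetilde{S}_d(s_i)$ used in \Cref{Thm2}. Because $S_d=\widetilde{S}_d$ throughout the range $0\le d<\deg v$, each $\zeta_v^{\mathcal{A}}(\sigma(\mathbf{s}))=\sum_{\deg v>d_{\sigma(1)}>\cdots>d_{\sigma(n)}\ge0}\prod_{i}S_{d_{\sigma(i)}}(s_{\sigma(i)})$ has precisely the shape of $\zeta_v(\sigma(\mathbf{s}))$, the only difference being the outer cutoff $d_{\sigma(1)}<\deg v$ in place of an unbounded range.

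I would then run the computation of \Cref{Thm2} verbatim modulo $v$: split the innermost summation into the parts $d_{\sigma(n)}>0$ and $d_{\sigma(n)}=0$, substitute the depth-one identity in the second part, and redistribute the freed index over every admissible position of the chain $d_{\sigma(1)}>\cdots>d_{\sigma(n-1)}>0$. The freed index still satisfies $1\le d\le\deg v-1$, so every resulting tuple stays strictly below $\deg v$ and the finite cutoff never interferes with the bookkeeping. The two facts that force the cancellation, namely that a transposition of two rows flips $\mathrm{sgn}(\sigma)$ and that the $n-1$ surviving terms pair off because $n$ is odd, concern only orderings and signs, so they transfer unchanged and yield $\sum_{\sigma\in S_n}\mathrm{sgn}(\sigma)\,\zeta_v^{\mathcal{A}}(\sigma(\mathbf{s}))\equiv0\pmod v$.

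Finally, since this congruence holds for all $v$ outside a finite set, the tuple $\bigl(\sum_{\sigma}\mathrm{sgn}(\sigma)\,\zeta_v^{\mathcal{A}}(\sigma(\mathbf{s}))\bigr)_v$ is supported on finitely many coordinates and hence vanishes in $\mathcal{A}=\prod_v\mathbb{F}_v/\oplus_v\mathbb{F}_v$, which is the assertion. The step I expect to demand the most care is the depth-one vanishing $Z(s)=0$: one must check not merely that the individual $\zeta_v^{\mathcal{A}}(s)$ vanish, but that the finitely many exceptional primes (those with $(q^{\deg v}-1)\mid s_i$) do no harm, which is precisely what the quotient by $\oplus_v\mathbb{F}_v$ arranges. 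Once that is settled, no part of the \Cref{Thm2} argument exploits the infinitude of the $d$-range, so the transfer is routine.
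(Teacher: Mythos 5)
Your proposal is correct and takes essentially the same route as the paper: the paper's one-sentence proof is precisely to rerun the argument of \Cref{Thm2} for the truncated sums $\zeta_v^{\mathcal{A}}$ using the depth-one vanishing $Z(s)=0$ in $\mathcal{A}$, which is exactly what you do. The only difference is that you supply the details the paper leaves implicit, namely the character-sum proof that $\zeta_v^{\mathcal{A}}(s)$ vanishes for all but the finitely many primes with $(q^{\deg v}-1)\mid s$, and the check that the cutoff $d_1<\deg v$ never interferes with the redistribution of the freed index.
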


\begin{Cor}
$K$ be any given function field, if $\mathrm{char}(\mathbb{F}_q)=2$, $v$ a prime, $n\geq 1$, $s_i$ and $2s_i$ are all distinct $($\textit{where $2s_i$ is included only when $k_i>1$}$)$, $q-$even and $k_i\geq 1$, $\forall i\leq n$, let $\phi:=\sum_{i=1}^nk_i$, and
$\mathbf{s}_0:=((s_1)_{k_1},\cdots,(s_n)_{k_n}).$
If we further have $k_i> 1$, define
$\mathbf{s}_i:=((s_1)_{k_1},\cdots,(s_{i})_{k_i-2},2s_i,\cdots,(s_n)_{k_n}).$
Then the following holds:
\begin{eqnarray*}
\sum_{{\textit{$i$ where $k_i>1$}}}\left(\sum_{\sigma\in \textbf{L}_i}\zeta^{\mathcal{A}}(\sigma(\mathbf{s}_i))\right)+\phi\sum_{\sigma\in \textbf{L}_0}\zeta^{\mathcal{A}}(\sigma(\mathbf{s}_0))=0.
\end{eqnarray*}
where $\textbf{L}_i$ is the set consisting of all possible re-orders of $\mathbf{s}_i$.
Here $(a)_j$ means a list of $j$ copies of $a$.
\end{Cor}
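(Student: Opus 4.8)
The plan is to deduce the asserted identity in $\mathcal{A}=\prod_v\mathbb{F}_v/\oplus_v\mathbb{F}_v$ from a family of congruences modulo $v$ and to obtain each such congruence by rerunning the proof of \Cref{Thm3} with truncated degree-sums. Since an element of $\mathcal{A}$ vanishes as soon as all but finitely many of its components do, it suffices to show that for every prime $v$ of sufficiently large degree one has
\begin{eqnarray*}
\sum_{i\text{ where }k_i>1}\Big(\sum_{\sigma\in\textbf{L}_i}\zeta_v^{\mathcal{A}}(\sigma(\mathbf{s}_i))\Big)+\phi\sum_{\sigma\in\textbf{L}_0}\zeta_v^{\mathcal{A}}(\sigma(\mathbf{s}_0))\equiv 0 \pmod v.
\end{eqnarray*}
Two elementary observations set this up. First, for $d<\deg(v)$ every monic $a\in A_{d+}$ is automatically coprime to $v$, so $S_d(s)=\widetilde{S}_d(s)$ in that range; hence $\zeta_v^{\mathcal{A}}(\mathbf{s})$ is precisely the truncation of $\zeta_v(\mathbf{s})$ obtained by restricting each degree index to $\{0,1,\dots,\deg(v)-1\}$ and reducing modulo $v$. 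Second, the depth-one finite zeta value vanishes for $q$-even $s$, i.e. the fact $Z(s)=\zeta^{\mathcal{A}}(s)=0$; componentwise this says $\sum_{0\le d<\deg(v)}S_d(s)\equiv 0\pmod v$, and in characteristic $2$ it rearranges to
\begin{eqnarray*}
S_0(s)\equiv\sum_{1\le d<\deg(v)}S_d(s)\pmod v,
\end{eqnarray*}
the exact finite analogue of the identity $\widetilde{S}_0(s)=\sum_{d=1}^{\infty}\widetilde{S}_d(s)$ that drives the proof of \Cref{Thm3}.

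With these inputs I would reproduce the argument of \Cref{Thm3} verbatim, making throughout the substitutions $\zeta_v\mapsto\zeta_v^{\mathcal{A}}$ and $\sum_{d=1}^{\infty}\mapsto\sum_{1\le d<\deg(v)}$, and reading every equality as a congruence modulo $v$. The crucial point is that the whole proof of \Cref{Thm3} consists of purely formal manipulations of finitely many power-sum monomials: splitting a sum according to whether two indices coincide or an index equals $0$, re-ordering blocks of indices, and cancelling paired terms in characteristic $2$. None of these operations refers to the top of the degree range, so each remains valid once every index is capped at $\deg(v)-1$, provided the cap is imposed uniformly. It is, because every monomial occurring in $\sum_{\sigma\in\textbf{L}_i}\zeta_v^{\mathcal{A}}(\sigma(\mathbf{s}_i))$ inherits the same truncation from the definition of $\zeta_v^{\mathcal{A}}$, and all the rearrangements only permute or identify existing indices. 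The only arithmetic inputs are the depth-one vanishing and its characteristic-$2$ rearrangement recorded above, both of which now hold as congruences modulo $v$; assembling the resulting congruences over all $v$ yields the identity in $\mathcal{A}$. The same specialization applied to \Cref{Thm2} gives the preceding Corollary.

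The step I expect to need the most care is checking that each use of the depth-one identity keeps the newly created index inside the truncation window. Where the proof of \Cref{Thm3} expands a factor $\widetilde{S}_0(s)$ as a tail $\sum_{d\ge 1}\widetilde{S}_d(s)$, the finite version instead expands $S_0(s)$ as $\sum_{1\le d<\deg(v)}S_d(s)$, and I would verify that this only ever promotes a zero index to indices strictly below $\deg(v)$, so that every intermediate expression is again a genuine truncated finite multiple zeta value and no boundary term is silently lost. Equivalently, one checks that the discarded tail $\sum_{d\ge\deg(v)}\widetilde{S}_d(s)$ is $\equiv 0\pmod v$, so that truncation commutes with all the rearrangements; granting this, the finiteness of the range is harmless and the combinatorial identity of \Cref{Thm3} transfers intact.
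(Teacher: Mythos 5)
Your proposal is correct and follows essentially the paper's own (very terse) route: the paper simply remarks that the corollary follows by applying \Cref{Thm3} to finite MZVs ``once we realize the fact that $Z(s)=0$ if $s$ is $q$-even,'' which is exactly your strategy of rerunning the proof of \Cref{Thm3} with every degree index truncated below $\deg(v)$, with the depth-one vanishing $\zeta^{\mathcal{A}}(s)=0$ as the sole arithmetic input, and reading all identities as congruences modulo $v$ for all but finitely many $v$. Your explicit verifications (reduction to almost-all components in $\mathcal{A}$, the identification $S_d=\widetilde{S}_d$ for $d<\deg(v)$, and the compatibility of the truncation with the purely combinatorial rearrangements) are precisely the details the paper leaves implicit.
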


We can further show that above results hold for $v-$adic and finite MZSVs.
\begin{Th}
$K$ is a function field over $\mathbb{F}_q$ with a rational infinity place, $v$ is a prime, $\mathbf{s}=(s_i)_{i=1}^n$. 
If $s_i\in \mathbb{Z}$ are distinct and $q-$even and $n$ is odd, then we have:
\begin{eqnarray*}
&&\sum_{\sigma\in S_n}sgn(\sigma)\zeta_v^{\star}(\sigma(\mathbf{s}))=0\\
&&\sum_{\sigma\in S_n}sgn(\sigma)\zeta^{\mathcal{A}\star}(\sigma(\mathbf{s}))=0
\end{eqnarray*}
where $S_n$ is the symmetry group and sgn is the sign function on $S_n$.
\end{Th}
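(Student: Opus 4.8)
The plan is to deduce both star identities from the already-established non-star identities (\Cref{Thm2} and its finite analogue) by an antisymmetrization argument, exploiting the fact that the only difference between $\zeta_v^{\star}$ and $\zeta_v$ is the appearance of equalities $d_i = d_{i+1}$ among the summation indices. Writing $\mathbf{t} = \sigma(\mathbf{s})$ with $t_i = s_{\sigma(i)}$, I would first split the defining sum
\[
\zeta_v^{\star}(\mathbf{t}) = \sum_{d_1 \geq \cdots \geq d_n \geq 0} \prod_{i=1}^n \widetilde{S}_{d_i}(t_i)
\]
according to its pattern of equalities: a chain $d_1 \geq \cdots \geq d_n \geq 0$ determines a decomposition of $\{1,\dots,n\}$ into consecutive blocks $B_1,\dots,B_m$, where two positions lie in the same block precisely when their $d$-values coincide, and consecutive blocks carry strictly decreasing common values $e_1 > \cdots > e_m \geq 0$. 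This yields
\[
\zeta_v^{\star}(\mathbf{t}) = \sum_{B_1,\dots,B_m} \ \sum_{e_1 > \cdots > e_m \geq 0} \prod_{k=1}^m \prod_{i \in B_k} \widetilde{S}_{e_k}(t_i),
\]
the outer sum ranging over all ways of cutting $\{1,\dots,n\}$ into consecutive blocks. Crucially, the block structure is determined by positions, not by the values $t_i$, so it is literally the same for every $\sigma$.

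Next I would form $T := \sum_{\sigma \in S_n} sgn(\sigma)\, \zeta_v^{\star}(\sigma(\mathbf{s}))$ and interchange the outer block sum with the sum over $\sigma$. For a fixed block decomposition in which some block $B_k$ has at least two elements, I choose two positions $i < i'$ in $B_k$ and pair $\sigma$ with $\sigma \circ (i\,i')$. This is a fixed-point-free involution on $S_n$ that reverses $sgn$, yet leaves the corresponding summand unchanged: only the two factors indexed by $i$ and $i'$ are affected, and since both sit in the same block they carry the common degree $e_k$, so interchanging their arguments merely reorders two factors of a commutative product while the range $e_1 > \cdots > e_m \geq 0$ is untouched. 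Hence every block decomposition containing a block of size $\geq 2$ contributes $0$ to $T$.

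The only surviving decomposition is therefore the one with all blocks singletons, i.e.\ $m = n$ and $e_1 > \cdots > e_n \geq 0$, whose contribution to $\zeta_v^{\star}(\sigma(\mathbf{s}))$ is exactly $\zeta_v(\sigma(\mathbf{s}))$. Consequently $T = \sum_{\sigma \in S_n} sgn(\sigma)\, \zeta_v(\sigma(\mathbf{s}))$, which vanishes by \Cref{Thm2} under the standing hypotheses that the $s_i$ are distinct, $q$-even, and $n$ is odd. The finite star identity follows verbatim: the same block decomposition applies to $\zeta_v^{\mathcal{A}\star}(\sigma(\mathbf{s}))$ (the constraint $\deg(v) > d_1$ simply becomes $\deg(v) > e_1$ and plays no role in the argument), the identical involution annihilates all non-singleton decompositions, and the surviving term reduces the alternating sum to $\sum_{\sigma \in S_n} sgn(\sigma)\, \zeta^{\mathcal{A}}(\sigma(\mathbf{s}))$, which is zero by the finite analogue of \Cref{Thm2} recorded above.

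I expect the main subtlety to be the bookkeeping that makes the involution simultaneously well-defined and sign-reversing for every fixed block decomposition, the key point being that the blocks depend only on positions, so the chosen transposition $(i\,i')$ acts uniformly across all $\sigma$ and the invariance of the summand is purely a matter of commutativity within a block. Once this is set up, no manipulation of the series $\widetilde{S}_d$ nor any further use of the $q$-even hypothesis is required: the $q$-even and parity conditions enter only through the cited non-star results.
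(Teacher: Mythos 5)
Your proposal is correct and is essentially the paper's own argument: the paper likewise splits $\sum_{\sigma}sgn(\sigma)\zeta_v^{\star}(\sigma(\mathbf{s}))$ into the strictly-decreasing part, which vanishes by \Cref{Thm2}, plus the terms containing a repeated degree $d_{s_{\sigma(i)}}=d_{s_{\sigma(j)}}$, and cancels the latter in pairs via the sign-reversing transposition $(i,j)$, exactly your involution. Your block-decomposition bookkeeping is just a more systematic (and slightly more careful) way of organizing the same cancellation, including the identical remark that the finite (\,$\mathcal{A}$\,) case follows verbatim.
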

\begin{proof}
We only prove for multiple zeta star values, finite multiple zeta values will follow similarly.

We distribute the $v-$adic MZSVs into $v-$adic MZVs and the terms with some $d$'s are equal.
\begin{eqnarray*}
&&\sum_{\sigma\in S_n}sgn(\sigma)\zeta_v^{\star}(\sigma(\mathbf{s}))\\
&=&\sum_{\sigma\in S_n}sgn(\sigma)\zeta_v(\sigma(\mathbf{s}))+\sum_{\substack{\sigma\in S_n\\ d_{s_{\sigma(1)}}\geq \cdots\geq d_{s_{\sigma(n)}}\geq 0\\ \exists i<j,\ d_{s_{\sigma(i)}}=d_{s_{\sigma(j)}}}}sgn(\sigma)\prod_{i=1}^n\widetilde{S}_{d_{s_{\sigma(i)}}}(s_{\sigma(i)})\\
&=&\sum_{\substack{\exists i<j,\ d_{s_{\sigma(i)}}=d_{s_{\sigma(j)}}\\ \sigma\in S_n, \textit{ where $\sigma(i)<\sigma(j)$}\\ d_{s_{\sigma(1)}}\geq \cdots\geq d_{s_{\sigma(n)}}\geq 0}}sgn(\sigma)\prod_{i=1}^n\widetilde{S}_{d_{s_{\sigma(i)}}}(s_{\sigma(i)})\\
&+&\sum_{\substack{\exists i<j,\ d_{s_{\sigma(i)}}=d_{s_{\sigma(j)}}\\ \sigma\in S_n, \textit{ where $\sigma(i)>\sigma(j)$}\\ d_{s_{\sigma(1)}}\geq \cdots\geq d_{s_{\sigma(n)}}\geq 0}}sgn(\sigma)\prod_{i=1}^n\widetilde{S}_{d_{s_{\sigma(i)}}}(s_{\sigma(i)})\\
&=&\sum_{\substack{\exists i<j,\ d_{s_{\sigma(i)}}=d_{s_{\sigma(j)}}\\ \sigma\in S_n, \textit{ where $\sigma(i)<\sigma(j)$}\\ d_{s_{\sigma(1)}}\geq \cdots\geq d_{s_{\sigma(n)}}\geq 0}}\left(sgn(\sigma)+sgn(i,j)sgn(\sigma)\right)\prod_{i=1}^n\widetilde{S}_{d_{s_{\sigma(i)}}}(s_{\sigma(i)})\\
&=&0,
\end{eqnarray*}
where the second equality follows by \Cref{Thm2}.
\end{proof}

\begin{Th}
Let $K$ be a function fields with a rational infinity. If $\mathrm{char}(\mathbb{F}_q)=2$, $v$ a monic prime, $n\geq 1$, $s_i$ and $2s_i$ are all distinct $($\textit{where $2s_i$ is included only when $k_i>1$}$)$, $q-$even and $k_i\geq 1$, $\forall i\leq n$, let $\phi:=\sum_{i=1}^nk_i$, and
$\mathbf{s}_0:=((s_1)_{k_1},\cdots,(s_n)_{k_n}).$
If we further have $k_i> 1$, define
$$\mathbf{s}_i:=((s_1)_{k_1},\cdots,(s_{i})_{k_i-2},2s_i,\cdots,(s_n)_{k_n}).$$
Then the following holds:
\begin{eqnarray*}
\sum_{{\textit{$i$ where $k_i>1$}}}\left(\sum_{\sigma\in \textbf{L}_i}\zeta^{\star}_v(\sigma(\mathbf{s}_i))\right)+\phi\sum_{\sigma\in \textbf{L}_0}\zeta^{\star}_v(\sigma(\mathbf{s}_0))=0.
\end{eqnarray*}
\begin{eqnarray*}
\sum_{{\textit{$i$ where $k_i>1$}}}\left(\sum_{\sigma\in \textbf{L}_i}\zeta^{\mathcal{A}\star}(\sigma(\mathbf{s}_i))\right)+\phi\sum_{\sigma\in \textbf{L}_0}\zeta^{\mathcal{A}\star}(\sigma(\mathbf{s}_0))=0.
\end{eqnarray*}
where $\textbf{L}_i$ is the set consisting of all possible re-orders of $\mathbf{s}_i$.
Here $(a)_j$ means a list of $j$ copies of $a$.
\end{Th}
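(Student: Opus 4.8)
The plan is to reduce both star identities to their already-established non-star counterparts by peeling off the diagonal contributions, exactly in the spirit of the argument just given for the star analogue of \Cref{Thm2}. For any tuple $\mathbf{t}$ one has the decomposition $\zeta_v^{\star}(\mathbf{t})=\zeta_v(\mathbf{t})+\zeta_v^{\mathrm{deg}}(\mathbf{t})$, where $\zeta_v^{\mathrm{deg}}$ collects the monomials $\prod_l\widetilde{S}_{d_l}(t_l)$ over weakly decreasing tuples $d_1\geq\cdots\geq d_r\geq 0$ in which at least one inequality is an equality. Substituting this into the left-hand side, the strict parts assemble into precisely the expression of \Cref{Thm3}, which vanishes. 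Hence the whole problem is to show that the total degenerate contribution $D:=\sum_{i:\,k_i>1}\sum_{\sigma\in\mathbf{L}_i}\zeta_v^{\mathrm{deg}}(\sigma(\mathbf{s}_i))+\phi\sum_{\sigma\in\mathbf{L}_0}\zeta_v^{\mathrm{deg}}(\sigma(\mathbf{s}_0))$ equals $0$.

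First I would sort the monomials appearing in $D$ according to their block structure: a weakly decreasing $d$-tuple groups the slots into maximal blocks sitting at a common level, the levels being strictly decreasing from block to block, and each block contributing the symmetric product $\prod_{s\,\in\,\text{block}}\widetilde{S}_d(s)$. Because we sum over the full set $\mathbf{L}_i$ (respectively $\mathbf{L}_0$) of re-orderings, and because the product attached to a block is symmetric in its entries, any block containing two entries of \emph{distinct} value occurs together with the re-ordering obtained by transposing those two entries; these two re-orderings produce the identical monomial, so such configurations appear an even number of times and cancel in characteristic $2$.

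What survives are the configurations in which every multi-element block is monochromatic. Here the key input is $\widetilde{S}_d(s)^2=\widetilde{S}_d(2s)$, valid in characteristic $2$ because the off-diagonal terms of the square cancel pairwise; thus a block consisting of two copies of $s_i$ contracts to a single factor $\widetilde{S}_d(2s_i)$ and produces a re-ordering of $\mathbf{s}_i$. The hard part is precisely the bookkeeping of these monochromatic merges: one must check that, after the contraction $\widetilde{S}_d(s_i)^2=\widetilde{S}_d(2s_i)$, the diagonal terms coming from the weighted block $\phi\sum_{\sigma\in\mathbf{L}_0}$ match, with even total multiplicity, the diagonal terms produced by the sums $\sum_{\sigma\in\mathbf{L}_i}$ over the contracted indices. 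This is the same combinatorial mechanism, driven by the identities $\phi(\phi+1)=0$ and $\sum_{i=1}^n(k_i-2)\equiv\phi$ in characteristic $2$, that already governed the proof of \Cref{Thm3}; carrying it out at the level of the degenerate monomials is where essentially all the work lies, and I expect it to be the main obstacle.

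Finally, the finite multiple zeta star value identity should follow verbatim: replacing $K_v$ by the truncation modulo $v$ and using the vanishing $Z(s)=0$ for $q$-even $s$ leaves every step above unchanged, exactly as in the passage from the $v$-adic to the finite case in the preceding proofs.
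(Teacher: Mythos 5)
Your strategy --- splitting $\zeta_v^{\star}=\zeta_v+\zeta_v^{\mathrm{deg}}$, recognizing the strict part as exactly the left-hand side of \Cref{Thm3}, and then trying to kill the degenerate contribution $D$ in characteristic $2$ --- is a genuinely different route from the paper's, but as written it has a fatal flaw and a missing core. The flaw: your claimed cancellation of configurations containing a non-monochromatic block is false. The number of re-orderings in $\textbf{L}_0$ that produce a fixed degenerate monomial is the product, over blocks, of the multinomial coefficients $\binom{|B|}{m_1,\ldots,m_n}$, where $m_j$ is the number of copies of $s_j$ sitting at that level, and such a coefficient can be odd for a mixed block. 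Already for $n=2$, $k_1=2$, $k_2=1$ (so $\mathbf{s}_0=(s_1,s_1,s_2)$), the configuration with $d_{s_1,1}=d_{s_1,2}=d_{s_2}$ is one block $\{s_1,s_1,s_2\}$ realized by exactly $\binom{3}{2,1}=3$ distinct re-orderings; three terms cannot be matched by a fixed-point-free transposition pairing, and this configuration survives modulo $2$. By Lucas' theorem, every mixed block whose multiplicity vector adds to $|B|$ without binary carries survives in the same way, so the reduction of $D$ to ``monochromatic merges'' alone is incorrect, and these surviving mixed terms must also enter the bookkeeping.

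The missing core: even apart from that error, the step you defer --- verifying that the surviving degenerate terms of $\sum_{i}\sum_{\sigma\in\textbf{L}_i}$ and of $\phi\sum_{\sigma\in\textbf{L}_0}$ cancel --- is the entire content of the theorem, and you explicitly flag it as ``the main obstacle'' rather than prove it; so the proposal is an outline, not a proof. For comparison, the paper does not route through \Cref{Thm3} at all. It works directly at the star level and shows, for each $i$ with $k_i>1$, that
\begin{equation*}
\sum_{\sigma\in \textbf{L}_i}\zeta^{\star}_v(\sigma(\mathbf{s}_i))+k_i\sum_{\sigma\in \textbf{L}_0}\zeta^{\star}_v(\sigma(\mathbf{s}_0))
\end{equation*}
recombines, via $\widetilde{S}_d(2s_i)=\widetilde{S}_d(s_i)^2$, into a sum in which the last index $d_{s_i,k_i}$ runs over \emph{all} values unconstrained, so the whole expression factors as $\bigl(\sum_{\sigma\in \textbf{L}_i^i}\zeta_v^{\star}(\sigma(\mathbf{s}_0^i))\bigr)\,\zeta_v(s_i)=0$ because $s_i$ is $q$-even; a similar completion shows that $\sum_{\sigma\in\textbf{L}_0}\zeta_v^{\star}(\sigma(\mathbf{s}_0))$ itself carries a factor $\zeta_v(s_i)=0$ whenever some $k_i=1$, and summing over $i$ yields both displayed identities (the finite case then goes through verbatim, as you say). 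If you wish to salvage your route, you must carry out the mod-$2$ block bookkeeping including the odd mixed blocks; at that point you will essentially be reconstructing this completion-of-the-sum argument.
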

\begin{proof}

We again only prove for multiple zeta star values, finite multiple zeta values will follow similarly.

Distribute and reordering the following two sums by the fact that $\widetilde{S}_d^2(s)=\widetilde{S}_d(2s)$ in characteristic $2$ and $\zeta_v(s)=0$ when $s$ is $q-$even.

When $k_i>1$, we have
\begin{eqnarray*}
&&\sum_{\sigma\in \textbf{L}_i}\zeta^{\star}_v(\sigma(\mathbf{s}_i))+k_i\sum_{\sigma\in \textbf{L}_0}\zeta^{\star}_v(\sigma(\mathbf{s}_0))\\
&=&\sum_{\substack{\forall j\neq i\  d_{s_j,1}\geq \cdots\geq d_{s_j,k_j}\geq 0\\ d_{s_i,1}\geq \cdots\geq d_{s_i,k_i-1}\geq 0\\ \exists l<k_i\ d_{s_i,l}=d_{s_i,k_i}}}\prod+\sum_{d_{s_j,1}\geq \cdots\geq d_{s_j,k_j}\geq 0}\prod+\sum_{l=1}^{k_i-1}\sum_{\substack{\forall j\neq i\  d_{s_j,1}\geq \cdots\geq d_{s_j,k_j}\geq 0\\ d_{s_i,1}\geq \cdots\geq d_{s_i,k_i-1}\geq 0\\ d_{s_i,k_i}\geq d_{s_i,l}}}\prod\\
&=&\sum_{\substack{\forall i\neq j,\ d_{s_j,1}\geq \cdots\geq d_{s_j,k_j}\geq 0\\ d_{s_i,1}\geq \cdots\geq d_{s_i,k_i-1}\geq 0}}{\prod}_i\prod_{h<k_i}\widetilde{S}_{d_{s_i,h}}(s_i)\left(\sum_{d_{s_i,k_i}}\widetilde{S}_{d_{s_i,k_i}}(s_i)\right)\\
&=&\sum_{\sigma\in \textbf{L}_i^i}\zeta^{\star}_v(\sigma(\mathbf{s}_0^i))\zeta_v(s_i)=0,
\end{eqnarray*}

when $k_i=1$, we have
\begin{eqnarray*}
\sum_{\sigma\in \textbf{L}_0}\zeta^{\star}_v(\sigma(\mathbf{s}_0))&=&\sum_{d_{s_j,1}\geq \cdots\geq d_{s_j,k_j}\geq 0}\prod\\
&=&\sum_{\substack{\forall i\neq j,\ d_{s_j,1}\geq \cdots\geq d_{s_j,k_j}\geq 0\\ d_{s_i,1}\geq \cdots\geq d_{s_i,k_i-1}\geq 0}}{\prod}_i\prod_{h<k_i}\widetilde{S}_{d_{s_i,h}}(s_i)\left(\sum_{d_{s_i,k_i}}\widetilde{S}_{d_{s_i,k_i}}(s_i)\right)\\
&=&\sum_{\sigma\in \textbf{L}_i^i}\zeta^{\star}_v(\sigma(\mathbf{s}_0^i))\zeta_v(s_i)=0,
\end{eqnarray*}

where $$\mathbf{s}_0^i:=((s_1)_{k_1},\cdots,(s_i)_{k_i-1},\cdots,(s_n)_{k_n}).$$
Hence, we proved this theorem.
\end{proof}

In Thakur's paper \cite[5.4]{T09}, he conjectured that there is no $\mathbb{F}_q-$linear relation between MZVs.
\begin{Conj}[Thakur]\label{lin.ind.conj}
Given a rational function field $K=\mathbb{F}_q(t)$. The MZVs $\zeta(\mathbf{s})$ at positive integers are linearly independent over $\mathbb{F}_q$.
\end{Conj}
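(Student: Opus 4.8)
The plan is to attack \Cref{lin.ind.conj} through the transcendence theory of $t$-motives rather than through elementary manipulation of the defining sums, since the statement concerns the field of periods generated by the values $\zeta(\mathbf{s})$. Work over $k=\mathbb{F}_q(\theta)$ with $A=\mathbb{F}_q[\theta]$, let $\mathbb{C}_\infty$ be the completion of a fixed algebraic closure of $k_\infty$, and let $\tilde{\pi}$ be the Carlitz period. First I would attach to each positive tuple $\mathbf{s}=(s_1,\dots,s_r)$ the Anderson--Thakur series built from the Anderson--Thakur polynomials $H_{s}$, whose specialization recovers $\zeta(\mathbf{s})$ up to an explicit factor of $\tilde{\pi}^{\,wt}$, and package these into a pre-$t$-motive carrying an explicit rigid-analytically trivial matrix $\Phi_{\mathbf{s}}$. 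Taking the direct sum of the motives attached to a finite set of tuples produces a single $t$-motive $M$ whose period matrix has the relevant MZVs among its entries.

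The central tool is then Papanikolas's Tannakian dictionary: the transcendence degree over $\bar{k}$ of the field generated by the entries of the period matrix of $M$ equals the dimension of the associated motivic Galois group $\Gamma_M$, and, more precisely, the Anderson--Brownawell--Papanikolas linear independence criterion guarantees that every $\bar{k}$-linear relation among those entries is forced by a morphism of the underlying $t$-motives. The crucial point is that over $\bar{k}$ the MZVs are \emph{not} independent: Eulerian relations such as $\zeta(q-1)=c\,\tilde{\pi}^{\,q-1}$ with $c\in k$, together with the shuffle/stuffle relations, already produce genuine $\bar{k}$-linear dependencies. Hence I cannot hope to deduce \Cref{lin.ind.conj} from $\bar{k}$-independence; the actual content is that all such dependencies require coefficients in $\bar{k}\setminus\mathbb{F}_q$.

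This forces the second, more delicate step: descent to the constant field. I would fix a weight $w$ (relations respect the conjectural weight grading, which the motivic construction makes rigorous), choose a $\bar{k}$-basis of the span $V_w$ of the weight-$w$ MZVs, and write each $\zeta(\mathbf{s})$ as $\sum_{\mathbf{b}}\lambda_{\mathbf{s},\mathbf{b}}\,\mathbf{b}$ with $\lambda_{\mathbf{s},\mathbf{b}}\in\bar{k}$. An $\mathbb{F}_q$-linear relation $\sum_{\mathbf{s}}c_{\mathbf{s}}\zeta(\mathbf{s})=0$ with $c_{\mathbf{s}}\in\mathbb{F}_q$ then collapses, by $\bar{k}$-independence of the basis, to the system $\sum_{\mathbf{s}}c_{\mathbf{s}}\lambda_{\mathbf{s},\mathbf{b}}=0$ for every $\mathbf{b}$, so the conjecture is equivalent to the assertion that the coordinate vectors $(\lambda_{\mathbf{s},\mathbf{b}})_{\mathbf{b}}$ are $\mathbb{F}_q$-linearly independent in $\bar{k}^{\dim V_w}$. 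Since $\mathbb{F}_q=\mathbb{C}_\infty^{\,\sigma}$ is exactly the Frobenius-fixed subfield, I would exploit the $\sigma$-structure carried by the period matrix: the motivic relations are $\sigma$-semilinear, so their defining coefficients are visibly not $\sigma$-invariant, and the goal is to show that no nontrivial $\sigma$-invariant (that is, $\mathbb{F}_q$) combination of the coordinate vectors can vanish.

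The hard part will be the dimension computation. Establishing \Cref{lin.ind.conj} in full requires knowing that the Eulerian and shuffle relations generate \emph{all} $\bar{k}$-relations in each weight --- equivalently, that $\dim\Gamma_M$ attains the conjectural value --- and this is open even for the $\bar{k}$-dimension of the MZV span beyond low weight; moreover the refined $\mathbb{F}_q$-statement needs the explicit relation space, which is strictly more than the dimension count. I would therefore proceed inductively and by special families: in depth one the algebraic independence of Carlitz zeta values (Chang--Yu) already yields the claim, and I would next treat fixed low weights where the relation module is completely understood, using the explicit $\Phi_{\mathbf{s}}$ to compute $\Gamma_M$ directly, before attempting a weight induction. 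The genuine obstruction, which I do not expect to remove in general, is producing a closed description of the full $\bar{k}$-relation space together with a proof that its coefficients never conspire to lie in $\mathbb{F}_q$.
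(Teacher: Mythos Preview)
The statement you are attempting to prove is \emph{not} proved in the paper: it is stated there as a conjecture attributed to Thakur, with no proof given. So there is nothing in the paper to compare your argument against, and the immediate issue is whether your proposal actually establishes the claim on its own.

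It does not, and you say so yourself. Your outline is a reasonable research programme---motivic interpretation via Anderson--Thakur series, Papanikolas's Tannakian machinery, the ABP criterion, and then a descent step from $\bar{k}$-relations to $\mathbb{F}_q$-relations---but the decisive input is missing at two points. First, computing $\dim\Gamma_M$ for the full family of MZV $t$-motives in arbitrary weight is exactly the open problem of determining the $\bar{k}$-dimension of the MZV span; you acknowledge this is unknown beyond low weight. Second, even granting a complete description of the $\bar{k}$-relation space, your Frobenius-descent argument is only a heuristic: you assert that the motivic relation coefficients ``are visibly not $\sigma$-invariant'' and that ``no nontrivial $\sigma$-invariant combination of the coordinate vectors can vanish,'' but you give no mechanism to verify this, and in fact it is precisely the content of the conjecture. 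Nothing in the $t$-motivic formalism automatically separates $\mathbb{F}_q$-coefficients from $\bar{k}$-coefficients in the relation space. So what you have is a plausible strategy with the hard step named but not executed; that matches the status of \Cref{lin.ind.conj} as an open conjecture.
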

In the case of $v-$adic MZVs we do find some nice relations of MZVs over $\mathbb{F}_q$. When $q=2,3,4,5,7$, weight smaller than $10,18,24,24,30$ respectively, we calculated the $\mathbb{F}_q-$linear relations for all $t-$adic MZVs with $q-$even tuples. And we experiment whether these relations work for higher degree primes (we only check for degree $\leq 5$). 

Based on the data, and the above results, We have the following slightly different conjectures:
\begin{Def}
Given any rational function field $K=\mathbb{F}_q(t)$ and any monic prime $v$, We define $W_v(K):=$ the vector space consisting of all $v-$adic MZVs $\zeta_v(\mathbf{s})$ at integers over $K$. And we define 
$$W(K):=(W_v(K))_{v \textit{ prime}}=\prod_{v \textit{ prime}}W_v(K).$$
\end{Def}

\begin{Conj}\label{lin.conj}
Given a rational function field $K=\mathbb{F}_q(t)$. If $\mathrm{char}(\mathbb{F}_q)=2$, the linear relations over $\mathbb{F}_q$ in $W(K)$ are spanned by \Cref{Thm2} and trivial zeros.
\end{Conj}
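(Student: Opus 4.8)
Because \Cref{Thm2} (whose $n=1$ instance is exactly the depth-one vanishing $\zeta_v(s)=0$ for $q$-even $s$) and the trivial zeros of \Cref{trivialzero} are already known to be genuine relations, one inclusion of the conjecture is free; the entire difficulty is the converse, that \emph{no other} $\mathbb{F}_q$-linear relation exists in $W(K)$. I would phrase this as an injectivity statement. Let $V$ be the $\mathbb{F}_q$-vector space on the formal symbols $[\mathbf{s}]$, one for each integer tuple, let $R\subseteq V$ be the subspace generated by the left-hand sides of \Cref{Thm2} together with the symbols $[\mathbf{s}]$ for $\mathbf{s}$ a trivial zero, and let $\pi\colon V\to W(K)$, $[\mathbf{s}]\mapsto(\zeta_v(\mathbf{s}))_v$. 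The conjecture is precisely $\ker\pi=R$, and since $R\subseteq\ker\pi$ is established, the goal is the reverse containment, i.e.\ that $\pi$ induces an injection $V/R\hookrightarrow W(K)$.

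Before attacking injectivity I would settle a bookkeeping point forced by the characteristic: in characteristic $2$ the sign is trivial, so \Cref{Thm2} reads $\sum_{\sigma\in S_n}\zeta_v(\sigma(\mathbf{s}))=0$, and \Cref{Thm3} also produces relations. Since the literal conjecture lists only \Cref{Thm2}, I would first verify that every relation of \Cref{Thm3} already lies in $R$; as \Cref{Thm3} is assembled from the same depth-one vanishing together with the squaring identity $S_d(2s)=S_d(s)^2$, I expect this to come out of the stuffle bookkeeping in its proof, but it must be checked so that the stated spanning set is honestly complete. I would then exhibit an explicit spanning set $\mathcal{B}$ of $V/R$---canonical orderings of tuples, keeping tuples with a repeated entry, a $q$-odd entry, or even length intact, and deleting one ordering from each multiset of distinct $q$-even entries of odd length---so that the conjecture becomes the linear independence of $\{\pi[\mathbf{s}]:\mathbf{s}\in\mathcal{B}\}$.

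For the independence I would exploit the defining feature of $W(K)=\prod_v W_v(K)$: a relation must hold for \emph{every} prime $v$ at once, so the coefficient vector lies in $\bigcap_v\ker(\zeta_v)$. The key elementary observation is that $\widetilde{S}_d(s)=S_d(s)$ whenever $d<\deg(v)$, so for each fixed $\mathbf{s}$ the truncations of $\zeta_v(\mathbf{s})$ agree with those of the $\infty$-adic value $\zeta(\mathbf{s})$ up to degree $\deg(v)$; letting $\deg(v)\to\infty$ should transport a universal $v$-adic relation, once the $q$-even vanishing has been divided out, into an $\mathbb{F}_q$-linear relation among the $\zeta(\mathbf{s})$ at positive integers. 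One then invokes Thakur's independence conjecture \Cref{lin.ind.conj} to force that descended relation to be trivial, leaving only the relations manufactured by the $q$-even vanishing---exactly \Cref{Thm2} and the trivial zeros. Making the limit $\deg(v)\to\infty$ rigorous, i.e.\ controlling the Euler factor removed by the tilde and reconciling the $v$-adic and $\infty$-adic topologies, is the technical heart of this step, and I would first carry it out for degree-one $v$, where the full zero classification is already known (the cited theorem of \cite{Shen19}) and matches the tabulated data.

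The main obstacle is that the decisive input is itself open and of transcendence type: \Cref{lin.ind.conj} is only approachable by the $t$-motivic machinery of Anderson--Brownawell--Papanikolas and Chang--Papanikolas--Yu, which has been developed for $\infty$-adic values but not yet in the $v$-adic product setting required here. I therefore expect the honest outcome to be conditional---``granting \Cref{lin.ind.conj} and a comparison between the $v$-adic and $\infty$-adic realizations, the conjecture holds''---with an unconditional proof awaiting a genuine $v$-adic period theory. A fully rigorous but partial result is nonetheless within reach: matching the dimension count $\dim (V/R)_{\le w}$ against $\dim W_v(K)_{\le w}$ in each low weight $w$ via the explicit $t$-adic computations already tabulated would confirm the conjecture in the verified range, while the general statement remains transcendence-bound.
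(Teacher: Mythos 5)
The statement you set out to prove is labelled a \emph{Conjecture} in the paper, and the paper contains no proof of it: it is proposed on the strength of the proven universal relations (\Cref{Thm2}, \Cref{Thm3}) together with machine computations (for $q=2,3,4,5,7$ in bounded weight, checked against primes of degree at most $5$). So there is no argument in the paper to compare yours against, and your proposal must stand on its own as a proof. It does not: by your own admission the decisive step is an invocation of \Cref{lin.ind.conj}, which is itself an open conjecture of Thakur, so the best possible outcome of your outline is a conditional statement, which is not a proof of \Cref{lin.conj}. Your framing of the problem (the kernel computation $\ker\pi = R$, the observation that the ``easy'' inclusion $R\subseteq\ker\pi$ follows from \Cref{Thm2} and \Cref{trivialzero}, and the consistency check that the relations of \Cref{Thm3} must lie in $R$) is sensible bookkeeping, but every substantive step after it is either open or unestablished.

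Beyond conditionality, the central mechanism you propose --- transporting a universal $v$-adic relation to an $\infty$-adic one by letting $\deg(v)\to\infty$ --- has a gap that is structural, not merely technical. From $\sum_{\mathbf{s}} c_{\mathbf{s}}\zeta_v(\mathbf{s})=0$ and the agreement $\widetilde{S}_d(s)=S_d(s)$ for $d<\deg(v)$, all you can conclude is that the common truncation $\sum_{\mathbf{s}} c_{\mathbf{s}}\sum_{\deg(v)>d_1>\cdots>d_r\geq 0}S_{d_1}(s_{1})\cdots S_{d_r}(s_{r})$, an element of $K$, equals minus a $v$-adic tail and hence has large $v$-adic valuation; large $v$-adic valuation says nothing about smallness at the place $\infty$, and the prime $v$ changes as you pass to the limit, so no $\infty$-adic relation among the $\zeta(\mathbf{s})$ is produced. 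There is also a scope mismatch you do not address: $W(K)$ is spanned by $v$-adic values at \emph{all} integer tuples, including tuples with negative and $q$-odd entries, whereas \Cref{lin.ind.conj} concerns only positive integers, so even a successful descent would leave most of the kernel uncontrolled. Finally, your proposed ``rigorous partial result'' --- matching $\dim(V/R)$ against $\dim W_v(K)$ in low weight --- is exactly the finite computational evidence the paper already reports; it verifies instances, it does not prove the conjecture.
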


\begin{Conj}\label{lin.conj2}
Given a rational function field $K=\mathbb{F}_q(t)$ and a finite monic prime $v$. If $\mathrm{char}(k)\neq 2$, the linear relations over $\mathbb{F}_q$ in $W_v(K)$ are spanned by \Cref{Thm3} and trivial zeros.
\end{Conj}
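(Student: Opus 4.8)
The plan to establish \Cref{lin.conj2} is to separate it into two logically independent halves: \emph{soundness} (the listed relations really do hold) and \emph{completeness} (nothing else does). Soundness is already supplied by the theorems above: the antisymmetrization relations hold by \Cref{Thm2}, and the vanishing relations $\zeta_v(\mathbf{s})=0$ hold by \Cref{trivialzero} together with the cited results of Shen. (Here I read the reference to \Cref{Thm3} as meaning \Cref{Thm2}, since \Cref{Thm3} is a genuinely characteristic-$2$ phenomenon whose proof repeatedly invokes $\mathrm{char}=2$, so in the regime $\mathrm{char}(\mathbb{F}_q)\neq 2$ the characteristic-free relations of \Cref{Thm2} are the only available generators.) Thus the entire content is completeness. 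Work within each finite-dimensional graded piece (by weight for positive tuples; in general within a suitable exhaustive finite-dimensional filtration so that all counts are finite), let $V$ be the $\mathbb{F}_q$-vector space freely generated by the tuples, let $\pi\colon V\to W_v(K)$ be the evaluation $\mathbf{s}\mapsto\zeta_v(\mathbf{s})$, and let $R_0\subseteq\ker\pi$ be the span of the generating relations. Since $R_0\subseteq\ker\pi$ is known, the conjecture is equivalent to $\dim_{\mathbb{F}_q}\operatorname{im}\pi\ge\dim_{\mathbb{F}_q}(V/R_0)$, so everything reduces to exhibiting a set $\mathcal{B}$ of tuples that represents a basis of $V/R_0$ and whose images $\{\zeta_v(\mathbf{s}):\mathbf{s}\in\mathcal{B}\}$ are $\mathbb{F}_q$-linearly \emph{independent} in $K_v$.

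Second, I would construct the candidate basis $\mathcal{B}$ by treating the generating relations as rewriting rules. The trivial-zero relations of \Cref{trivialzero} let us discard outright every tuple meeting those combinatorial conditions. The antisymmetrization relation of \Cref{Thm2}, which applies to tuples of distinct $q$-even entries of odd length, expresses the signed sum over $S_n$ as zero; fixing a linear order on tuples sharing a given underlying multiset lets us rewrite the least such tuple as an $\mathbb{F}_q$-combination of the others. Closing the set of surviving tuples under these rewrites selects a canonical representative in each class modulo $R_0$, and $\mathcal{B}$ is the collection of these representatives. This step is largely bookkeeping, but it is not vacuous: one must verify that the rewriting system is terminating and confluent, so that $\mathcal{B}$ genuinely descends to a basis of $V/R_0$ with no overcounting, and one must check the resulting count of $|\mathcal{B}|$ in each weight against the machine computations reported above for $q=2,3,4,5,7$.

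Third---and this is the crux---one must prove the $\mathbb{F}_q$-linear independence of $\{\zeta_v(\mathbf{s}):\mathbf{s}\in\mathcal{B}\}$ inside $K_v$. The natural device is a $v$-adic leading-term analysis: filter $K_v$ by $v$-adic valuation, use the explicit shape of the multiple harmonic sums $\widetilde{S}_d(\mathbf{s})$ and their reductions modulo powers of $v$, and show that distinct $\mathbf{s}\in\mathcal{B}$ produce linearly independent classes in the associated graded. The base case of depth one is controlled by the nonvanishing of $\zeta_v(s)$ for $q$-odd $s$; I would then induct on depth, peeling off the innermost summation index $d_r$ and tracking how the $v$-adic orders of $\widetilde{S}_d(s)$ stratify as $d$ grows. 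The aim is a valuation-theoretic triangularity: an ordering of $\mathcal{B}$ under which the matrix of leading $v$-adic coefficients of the $\zeta_v(\mathbf{s})$ is nonsingular, which forces independence.

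The hard part will be exactly this independence statement, and it is hard for a structural reason. Unlike the $\infty$-adic MZVs---where the transcendence machinery of Anderson--Brownawell--Papanikolas and the $t$-motivic methods of Chang furnish a period interpretation that forces independence, and underlies Thakur's \Cref{lin.ind.conj}---no such motivic framework is presently available on the $v$-adic side, so one is thrown back on a purely combinatorial and valuation-theoretic argument. The principal danger is ``accidental'' $v$-adic coincidences: collisions of leading terms across tuples of different depth that a naive valuation filtration cannot separate. Overcoming this will likely require a finer, depth-graded filtration---perhaps indexed jointly by $v$-adic order and by the number of summation indices that have collapsed to a common value---together with control of $\widetilde{S}_d(s)\bmod v^k$ sharp enough to rule out such collisions. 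Establishing that the resulting graded pieces remain independent \emph{uniformly in the prime $v$ and across all rational function fields} $K=\mathbb{F}_q(t)$ is the decisive obstacle on which the conjecture rests, and is the reason it remains open.
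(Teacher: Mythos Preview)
There is nothing to compare here: the statement is a \emph{Conjecture}, and the paper gives no proof of it. It is introduced with the words ``Based on the data, and the above results, we have the following slightly different conjectures,'' and is supported only by the numerical experiments for $q=2,3,4,5,7$ mentioned just before. So the target you are aiming at does not exist.

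Your document is not a proof either, and to your credit you say so yourself: you close by calling the independence step ``the decisive obstacle on which the conjecture rests, and is the reason it remains open.'' What you have written is a reasonable research outline---split into soundness and completeness, reduce completeness to $\mathbb{F}_q$-linear independence of a candidate basis $\mathcal{B}$ of $V/R_0$, and attack that independence by a $v$-adic leading-term/valuation argument---but the entire weight of the conjecture sits in that last step, and you supply no mechanism to carry it out. A ``valuation-theoretic triangularity'' for $v$-adic MZVs is exactly what no one currently knows how to produce; absent a motivic or transcendence input on the $v$-adic side, the inductive peeling of $d_r$ and the hoped-for nonsingular matrix of leading coefficients are aspirations, not arguments. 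So as a proof this has a genuine gap at the only nontrivial point.

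One incidental remark: your reading of the cross-reference is almost certainly correct. \Cref{Thm3} is a characteristic-$2$ statement (its proof uses $\mathrm{char}=2$ repeatedly), while \Cref{Thm2} is characteristic-free; the two conjectures \Cref{lin.conj} and \Cref{lin.conj2} appear to have their references to \Cref{Thm2} and \Cref{Thm3} interchanged. Interpreting \Cref{lin.conj2} as ``spanned by the relations of \Cref{Thm2} and trivial zeros'' is the only reading that makes sense when $\mathrm{char}(\mathbb{F}_q)\neq 2$.
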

\begin{Rem}
We notice that the definition of trivial zeros varies when $v$ changes, but it's easy to see that the intersection of all trivial zeros corresponding to different finite primes is nonempty, and we refer to this intersection as our definition of trivial zeros in above conjectures. We have proved in \cite{Shen19} that these zero sets are nonempty.
\end{Rem}

\section{The MZVs Case and Conjectural Motivic Case}
We have proved some universal $\mathbb{F}_q-$linear relations in the case of interpolated $v-$adic MZVs. Similarly, we have a motivic version of \Cref{Thm2}, and \Cref{Thm3}. We won't give the construction of motivic MZVs here, one can read \cite{CM17} for the set-up and details.

\begin{Th}\label{A}
Let $K$ be any function field, $\mathbf{s}=(s_i)_{i=1}^n$, $\mathbf{s}^j:=(s_i)_{i\neq j}$. 
If $s_i$ are distinct and $q-$even and $n$ is odd, then we have:
\begin{eqnarray*}
\sum_{\sigma\in S_n}sgn(\sigma)\zeta(\sigma(\mathbf{s}))=\sum_{j=1}^n(-1)^{n-j}\zeta(s_j)\sum_{\sigma\in S_n^j}sgn(\sigma)\zeta(\sigma(\mathbf{s}^j))
\end{eqnarray*}
where $S_n, S_n^j$ are the symmetry group of $\{1,\cdots,n\}$, and $\{1,\cdots,\hat{j},\cdots,n\}$ respectively, in particular, we have $S_n^j\cong S_{n-1}$, sgn is the sign function on $S_n$.
\end{Th}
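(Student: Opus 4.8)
The plan is to recognize the signed symmetric sum as a single determinant and then prove the identity by Laplace (cofactor) expansion along the bottom row, with a one-line parity count doing the real work. Throughout, let $M=M(d_1,\dots,d_n)$ denote the $n\times n$ matrix with $(i,j)$-entry $S_{d_i}(s_j)$, and let $M_j=M_j(e_1,\dots,e_{n-1})$ denote the $(n-1)\times(n-1)$ matrix with $(i,l)$-entry $S_{e_i}(s_l)$, where $l$ runs over $\{1,\dots,n\}\setminus\{j\}$ in increasing order. First I would interchange the order of summation in the definition of the left-hand side and use $\sum_{\sigma\in S_n}sgn(\sigma)\prod_i M_{i,\sigma(i)}=\det M$ to obtain
\[
\sum_{\sigma\in S_n}sgn(\sigma)\zeta(\sigma(\mathbf{s}))=\sum_{d_1>\cdots>d_n\geq 0}\det M(d_1,\dots,d_n).
\]
The same computation in depth $n-1$, with the $j$-th column removed, identifies the inner factor on the right-hand side as $\sum_{\sigma\in S_n^j}sgn(\sigma)\zeta(\sigma(\mathbf{s}^j))=\sum_{e_1>\cdots>e_{n-1}\geq 0}\det M_j(e_1,\dots,e_{n-1})$.

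Next I would substitute these into the right-hand side, writing $\zeta(s_j)=\sum_{e\geq 0}S_e(s_j)$ and collecting the outer sum over $j$:
\[
\sum_{j=1}^n(-1)^{n-j}\zeta(s_j)\sum_{\sigma\in S_n^j}sgn(\sigma)\zeta(\sigma(\mathbf{s}^j))=\sum_{e\geq 0}\ \sum_{e_1>\cdots>e_{n-1}\geq 0}\ \sum_{j=1}^n(-1)^{n-j}S_e(s_j)\det M_j(e_1,\dots,e_{n-1}).
\]
The innermost $j$-sum is exactly the Laplace expansion along the bottom row of the $n\times n$ matrix $M(e_1,\dots,e_{n-1},e)$ whose rows are indexed, in order, by $e_1,\dots,e_{n-1},e$ (here $(-1)^{n+j}=(-1)^{n-j}$). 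Hence the right-hand side equals $\sum_{e\geq 0}\sum_{e_1>\cdots>e_{n-1}\geq 0}\det M(e_1,\dots,e_{n-1},e)$.

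The final step is a regrouping. Each $\det M(e_1,\dots,e_{n-1},e)$ vanishes unless the row indices are pairwise distinct, so I would group the surviving terms by the unordered set $T=\{t_1>\cdots>t_n\}$ of indices they involve. For fixed $T$ the free index $e$ runs over the elements of $T$: choosing $e=t_k$ forces $(e_1,\dots,e_{n-1})$ to be $T\setminus\{t_k\}$ in decreasing order, and restoring the fully sorted order costs the sign of the cycle that moves $t_k$ to the last position, namely $(-1)^{n-k}$. Thus the total contribution of $T$ is $\bigl(\sum_{k=1}^n(-1)^{n-k}\bigr)\det M(t_1,\dots,t_n)$, and since $n$ is odd we have $\sum_{k=1}^n(-1)^{n-k}=\tfrac{1-(-1)^n}{2}=1$. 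The right-hand side therefore collapses to $\sum_{t_1>\cdots>t_n\geq 0}\det M(t_1,\dots,t_n)$, which is precisely the left-hand side computed in the first step.

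The conceptual core is short, so I do not expect a genuine obstacle; the only points demanding care are the sign bookkeeping in the Laplace step and the legitimacy of interchanging and regrouping the ($\infty$-adically convergent, hence freely rearrangeable non-archimedean) series, which is where whatever friction there is will live. I would also flag that this argument uses only the parity hypothesis $n$ odd: the distinctness and $q$-even conditions are inherited from the setting of \Cref{Thm2} and ensure convergence and nonvanishing of the individual values, but play no role in the determinantal identity itself (indeed if two $s_i$ coincide, two columns of $M$ agree and both sides vanish).
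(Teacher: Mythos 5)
Your proof is correct, and at its core it performs the same decomposition as the paper's: expand $\zeta(s_j)$ into a sum over a free index, split according to whether that index collides with one of the $n-1$ ordered indices, kill the colliding part, and match what remains with the left-hand side using $\sum_{k=1}^{n}(-1)^{n-k}=1$ for odd $n$. What you do differently is package everything as determinants, and that buys a lot. Where you dispose of the colliding terms in one line (a determinant with a repeated row vanishes), the paper proves the corresponding statement (its ${\sum}_1=0$) by constructing explicit sign-reversing pairings of permutations in three separate cases ($\sigma^{-1}(k)<k<j$, $k\leq\sigma^{-1}(k)<j$, $k<j<\sigma^{-1}(k)$), each with its own transposition-chain formula; and where you regroup the surviving terms by Laplace expansion along the bottom row plus a single cycle-sign count, the paper builds an $n$-to-$1$ correspondence by hand via auxiliary permutations $\bar{\sigma}$ and $\sigma_{\sigma(j)}$. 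Your formulation also makes transparent two points the paper states separately: the identity holds term-by-term in the unordered tuple of $d$'s, which is exactly the paper's remark that the theorem persists for truncations $\zeta(d,\mathbf{s})$; and nothing about $S_d$ is used beyond commutativity of the target ring, so your argument is verbatim a proof of the paper's later generalization (\Cref{C}) to arbitrary multiple harmonic type sums. That generalization also confirms your closing observation: $q$-evenness plays no role here (it matters only in the $v$-adic statement \Cref{Thm2}, where it forces the depth-one factors $\zeta_v(s_j)$ to vanish), and if two $s_i$ coincide both sides vanish, as two columns of your matrix $M$ agree. The rearrangement issue you flag is the right one to worry about and is harmless: in the complete ultrametric field $K_\infty$ convergent series are unconditionally convergent (and for non-positive $q$-even entries the power sums $S_d(s)$ vanish for all large $d$), so the interchanges and regroupings are legitimate.
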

\begin{proof}
We first notice that there exists a natural embedding $\iota_j: S_n^j\longrightarrow S_n$, such that $\forall \sigma\in S_n^j$:
$$\iota_j(\sigma)(j)=j,\ \forall k\neq j,\ \iota_j(\sigma)(k)=\sigma(k)$$
and $sgn(\iota_j(\sigma))=sgn(\sigma)$.
Let $\prod:=S_{d_{s_1}}(s_1)\cdots S_{d_{s_n}}(s_n).$
Then, we have 

\begin{eqnarray*}
&&\sum_{j=1}^n(-1)^{n-j}\zeta(s_j)\sum_{\sigma\in S_n^j}sgn(\sigma)\zeta(\sigma(\mathbf{s}^j))\\
&=&\sum_{j=1}^n(-1)^{n-j}\sum_{\substack{\sigma\in S_n^j\\ d_{s_{\sigma(1)}>\cdots>d_{s_{\sigma(j-1)}}>d_{s_{\sigma(j+1)}}>\cdots>d_{s_{\sigma(n)}}\geq 0}\\ d_{s_j}\geq 0}}sgn(\sigma)\prod\\
&=&\sum_{j=1}^n(-1)^{n-j}\sum_{\substack{\sigma\in S_n^j\\ d_{s_{\sigma(1)}>\cdots>d_{s_{\sigma(j-1)}}>d_{s_{\sigma(j+1)}}>\cdots>d_{s_{\sigma(n)}}\geq 0}\\ \exists k\neq j,\ d_{s_j}=d_{s_k}}}sgn(\sigma)\prod\\
&+&\sum_{j=1}^n(-1)^{n-j}\sum_{\substack{\sigma\in S_n^j\\ d_{s_{\sigma(1)}>\cdots>d_{s_{\sigma(j-1)}}>d_{s_{\sigma(j+1)}}>\cdots>d_{s_{\sigma(n)}}\geq 0}\\ \forall k\neq j,\ d_{s_j}\neq d_{s_k}\geq 0}}sgn(\sigma)\prod\\
&=&{\sum}_1+{\sum}_2
\end{eqnarray*}

Now we consider the sums in ${\sum}_1$ with $k<j$.
For all terms with $\sigma\in S_n^j, d_{s_{\sigma(1)}}>\cdots>d_{s_{\sigma(j-1)}}>d_{s_{\sigma(j+1)}}>\cdots>d_{s_{\sigma(n)}}\geq 0, d_{s_j}=d_{s_k}$. 

If $\sigma^{-1}(k)<k<j$, then $\exists \widetilde{\sigma}\in S_n^k$ such that $$\widetilde{\sigma}=\sigma\circ\begin{pmatrix}
    \sigma^{-1}(k) & \cdots & k \\
    \sigma^{-1}(k)+1 & \cdots &  \sigma^{-1}(k)
  \end{pmatrix}\circ\begin{pmatrix}
    \sigma^{-1}(k) & \cdots & \hat{k} & \cdots & j \\
    j & \cdots & \hat{k} & \cdots & j-1
  \end{pmatrix}.$$
It's easy to show that $\widetilde{\sigma}(\sigma^{-1}(k))=j, \widetilde{\sigma}(k)=k$, and give rise to the chain $d_{s_{\sigma(1)}}>\cdots>d_{s_{\sigma^{-1}(k)-1}}>d_{s_j}>d_{s_{\sigma^{-1}(k)+1}}>\cdots>d_{s_{\sigma(j-1)}}>d_{s_{\sigma(j+1)}}>\cdots>d_{s_{\sigma(n)}}\geq 0, d_{s_j}=d_{s_k}$. Then, these two permutations give rise to the same product in ${\sum}_1$, with $(-1)^{n-k}sgn(\widetilde{\sigma})=(-1)^{n-k}(-1)^{k-j-1}sgn(\sigma)=-(-1)^{n-j}sgn(\sigma)$. Hence they add up to $0$.

If $k\leq \sigma^{-1}(k)<j$, then $\exists \widetilde{\sigma}\in S_n^k$ such that $$\widetilde{\sigma}=\sigma\circ\begin{pmatrix}
    k & \cdots & \sigma^{-1}(k) \\
    \sigma^{-1}(k) & \cdots &  \sigma^{-1}(k)-1
  \end{pmatrix}\circ\begin{pmatrix}
    \sigma^{-1}(k)+1 & \cdots  & j \\
    j & \cdots &  j-1
  \end{pmatrix}.$$
Same argument gives us that these two permutations give rise to the same product in ${\sum}_1$, with $(-1)^{n-k}sgn(\widetilde{\sigma})=-(-1)^{n-j}sgn(\sigma)$. Hence they add up to $0$.

If $k<j<\sigma^{-1}(k)$, then $\exists \widetilde{\sigma}\in S_n^k$ such that $$\widetilde{\sigma}=\sigma\circ\begin{pmatrix}
    k & \cdots & \hat{j} & \cdots & \sigma^{-1}(k) \\
    \sigma^{-1}(k) & \cdots & \hat{j} & \cdots & \sigma^{-1}(k)-1
  \end{pmatrix}\circ\begin{pmatrix}
    j & \cdots  & \sigma^{-1}(k) \\
    j+1 & \cdots &  j
  \end{pmatrix}.$$
Same argument gives us that these two permutations give rise to the same product in ${\sum}_1$, with $(-1)^{n-k}sgn(\widetilde{\sigma})=-(-1)^{n-j}sgn(\sigma)$. Hence they add up to $0$.

Adding them all we get ${\sum}_1=0$, (notice that the terms with $j<k$ is added in $\widetilde{\sigma}$, so we didn't miss any terms).

We claim that if we fix an $n-$element set $D:=\{d_{s_i}\}_{i=1}^n$, then each product shows up in ${\sum}_2$ with the $d$'s in $D$ is corresponding to a $\sigma\in S_n$ and actually, for all $\sigma\in S_n$, there exists a unique term in $S_n^j$ corresponding to it for each $j$. Hence it's an $n$ to $1$ correspondence.

Now we prove this claim:
For all $\sigma\in S_n$, we consider the term product from it by $d_{s_{\sigma(1)}}>d_{s_{\sigma(j)}}>\cdots>d_{s_{\sigma(n)}}$
we can always define 
\begin{eqnarray*}
\Bar{\sigma}&:=&\begin{pmatrix}
    \sigma(j) & \cdots  & n \\
    n & \cdots &  n-1
  \end{pmatrix}\circ\sigma\\
&=&\begin{pmatrix}
    1 & \cdots  & \sigma(j)-1 & \sigma(j) & \sigma(j)+1 & \cdots & n \\
    \sigma(1) & \cdots &  \sigma(\sigma(j)-1) & n & \sigma(\sigma(j)+1) & \cdots & \sigma(n)
    \end{pmatrix}
\end{eqnarray*}
We can further define
$$\sigma_{\sigma(j)}:=\begin{pmatrix}
    \sigma(j) & \cdots  & n \\
    \sigma(j)+1 & \cdots &  \sigma(j)
  \end{pmatrix}\circ\Bar{\sigma}\in S_n^{\sigma(j)},$$
and the product corresponding to $\sigma_{\sigma(j)}$ in the sum indexed by $S_n^{\sigma(j)}$ gives rise to such product.

The sign of each term should be $(-1)^{n-\sigma(j)}sgn(\sigma_{\sigma(j)})=(-1)^{n-\sigma(j)}sgn(\sigma)$. Hence, when sum them up with $j$ runs from $1$ to $n$, we get $sgn(\sigma)$ since $n$ is odd. This completes the proof.
\end{proof}

\begin{Th}\label{B}
Let $K$ be any function field, if $char(\mathbb{F}_q)=2$, $n\geq 1$, $s_i$ and $2s_i$ are all distinct(\textit{where $2s_i$
is included only when $k_i>1$}), $q-$even and $k_i\geq 1$, $\forall i\leq n$, let $\phi:=\sum_{i=1}^nk_i$, and
$\mathbf{s}_0:=((s_1)_{k_1},\cdots,(s_n)_{k_n}).$
If we further have $k_i> 1$, define
$\mathbf{s}_i:=((s_1)_{k_1},\cdots,(s_{i})_{k_i-2},2s_i,\cdots,(s_n)_{k_n}).$
Then the following holds:
\begin{eqnarray*}
&&\sum_{{\textit{$i$ where $k_i>1$}}}\sum_{\sigma\in \textbf{L}_i}\zeta(\sigma(\mathbf{s}_i))+\phi\sum_{\sigma\in \textbf{L}_0}\zeta(\sigma(\mathbf{s}_0))\\
&=&\sum_{j, \ where \ k_j\neq 2}\zeta(s_j)\sum_{{\textit{$i\neq j$ where $k_i>1$}}}\sum_{\sigma\in \textbf{L}_i^j}\zeta(\sigma(\mathbf{s}_i^j))\\
&+&\sum_{j, \ where \ k_j> 2}\zeta(s_j)\sum_{\sigma\in \textbf{L}_j^j}\zeta(\sigma(\mathbf{s}_j^j))+\sum_{j,\ where\  k_j>1}\zeta(2s_j)\sum_{\sigma\in \textbf{L}_j^j}\zeta(\sigma(\mathbf{s}_j^{2j}))\\
&+&\sum_{j=1}^n\zeta(s_j)\phi\sum_{\sigma\in \textbf{L}_0^j}\zeta(\sigma(\mathbf{s}_0^j))
\end{eqnarray*}
where $$\mathbf{s}_i^j:=((s_1)_{k_1},\cdots,(s_{i})_{k_i-2},2s_i,\cdots,(s_j)_{k_j-1},\cdots,(s_n)_{k_n}),$$
$$\mathbf{s}_i^i:=((s_1)_{k_1},\cdots,(s_{i})_{k_i-3},2s_i,\cdots,(s_n)_{k_n}),$$
$$\mathbf{s}_i^{2j}:=((s_1)_{k_1},\cdots,(s_{i})_{k_i-2},2s_i,\cdots,(s_j)_{k_j-1},(s_{j+1})_{k_{j+1}}\cdots,(s_n)_{k_n}),$$
and $\textbf{L}_i^j$ is the set consisting of all possible re-orders of $\mathbf{s}_i^j$, when $k_i>1,j\neq i$ or $k_i>2,j=i$.
\end{Th}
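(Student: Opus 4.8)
The plan is to run the combinatorial argument of \Cref{Thm3} essentially verbatim, with the single structural change that replaces the identity which made everything collapse to zero. In the proof of \Cref{Thm3} the decisive input was $\zeta_v(s)=0$ for $q$-even $s$, used in the form $\widetilde{S}_0(s)=\sum_{d\geq 1}\widetilde{S}_d(s)$ (in characteristic $2$, so signs are irrelevant). For the Carlitz-type values $\zeta$ living in $K_\infty$ no such vanishing holds; instead one has the tautology $\sum_{d\geq 0}S_d(s)=\zeta(s)$, equivalently $S_0(s)=\zeta(s)+\sum_{d\geq 1}S_d(s)$ in characteristic $2$. Thus every place in the proof of \Cref{Thm3} where a depth-one sum was silently discarded will now leave a residual factor $\zeta(s)$, or $\zeta(2s)$ when the extracted index is the doubled one, and the content of the theorem is precisely that these residues assemble into the four families on the right-hand side.

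Concretely I would first reinstate the notation of \Cref{Thm3}: the multi-indexed sums $\sum_A,\sum_B,\sum_C$, the products $\prod,\prod_i$, and the auxiliary sums ${\sum}_1,\dots,{\sum}_4$, everywhere replacing $\widetilde{S}$ by $S$ and $\zeta_v$ by $\zeta$. For each $i$ with $k_i>1$ I would expand $\sum_{\sigma\in\mathbf{L}_i}\zeta(\sigma(\mathbf{s}_i))$ by splitting the repeated-index case via $S_d(2s_i)=S_d(s_i)^2$, producing the same decomposition ${\sum}_1+{\sum}_2$, and then carry out the identical three-way distribution of ${\sum}_2$ together with the re-ordering matchings used there. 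The only change is that each application of $\sum_{d\geq 0}S_d(\cdot)$ now contributes $\zeta(s_i)$ at a singly-occurring index and $\zeta(2s_i)$ where a sum over the doubled index $S_d(2s_i)$ is carried out; I would track these explicitly rather than dropping them.

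The heart of the argument is then matching residues to the right-hand side. Extraction of a lone copy of $s_j$ with $j\neq i$ from the doubled-at-$i$ configuration yields the family $\sum_{k_j\neq 2}\zeta(s_j)\sum_{i\neq j}\sum_{\mathbf{L}_i^j}\zeta(\sigma(\mathbf{s}_i^j))$; the case $j=i$ with $k_j>2$ gives $\sum_{k_j>2}\zeta(s_j)\sum_{\mathbf{L}_j^j}\zeta(\sigma(\mathbf{s}_j^j))$; summing off the doubled index itself gives $\sum_{k_j>1}\zeta(2s_j)\sum_{\mathbf{L}_j^j}\zeta(\sigma(\mathbf{s}_j^{2j}))$; and the residue from the $\phi\sum_{\mathbf{L}_0}\zeta(\sigma(\mathbf{s}_0))$ block produces $\phi\sum_{j}\zeta(s_j)\sum_{\mathbf{L}_0^j}\zeta(\sigma(\mathbf{s}_0^j))$. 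The same characteristic-$2$ arithmetic as in \Cref{Thm3}, namely the identities $\sum_i(k_i-2)=\phi-2n=\phi$ and $\phi(\phi+1)=0$ together with the pairwise cancellation of terms carrying two equal copies, is what reduces the bulk interior sums to the $\sum_{\mathbf{L}_0}\zeta(\sigma(\mathbf{s}_0))$ term on the left and leaves only the four residual families on the right. For the degenerate case $k_i=1$ for all $i$ the statement collapses to \Cref{A} when $\phi$ is odd and to a trivial identity when $\phi$ is even, exactly paralleling the end of the proof of \Cref{Thm3}.

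The main obstacle will be the bookkeeping of the residues, not any new idea: one must verify that each of the many invocations of $\sum_{d\geq 0}S_d$ lands in exactly one of the four families with the correct multiplicity, and in particular that the arithmetic factor $\phi$ and the distinction between $k_j=1$, $k_j=2$ and $k_j>2$ that governs which of $\mathbf{s}_i^j,\mathbf{s}_j^j,\mathbf{s}_j^{2j}$ occurs come out right. A useful sanity check is to specialize the finished identity to the $v$-adic values, where every $\zeta(s_j)$ and $\zeta(2s_j)$ vanishes and the entire right-hand side disappears, recovering \Cref{Thm3} and confirming that no residue has been mislabeled.
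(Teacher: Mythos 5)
Your guiding idea --- that \Cref{B} should be the lift of \Cref{Thm3} obtained by replacing the vanishing $\zeta_v(s)=0$ with the tautology $S_0(s)=\zeta(s)+\sum_{d\geq 1}S_d(s)$ and tracking the resulting $\zeta(s_j)$, $\zeta(2s_j)$ residues --- is morally sound, but it runs in the opposite direction from the paper, and your proposal stops exactly where the content of the theorem begins. The paper never touches the left-hand side until the last step: it expands each of the four right-hand families by distributing the single depth-one factor $\zeta(s_j)$ (resp. $\zeta(2s_j)$) over the summation chain of the accompanying re-order sum, so that the new index $d_{s_j}$ is either inserted strictly between existing indices (giving the terms called $A$, $F$, $J$, $M$ there) or collides with an existing index (the terms $B,C,D,E,G,H,I,K,L,N,O$); it then disposes of the collision terms by characteristic-$2$ pairings ($E=B=G=O=0$) and cross-family identifications ($I=D$, $H=K$, $C=L$), and counts multiplicities of the surviving terms $A+F+J+\phi(M+N)$ to land on the left-hand side. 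That direction is essentially forced by the shape of the identity: every right-hand term carries exactly one depth-one factor, so a single one-step stuffle expansion suffices and no iterated corrections ever appear.

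Your direction has a concrete obstacle that the proposal does not engage with. When you replace $S_0(s_i)$ by $\zeta(s_i)+\sum_{d\geq 1}S_d(s_i)$ inside the decomposition of ${\sum}_2$ (or inside the ``$0=\cdots$'' trick used for the indices with $k_i=1$), the residue you create is $\zeta(s_i)$ times a sum in which all remaining $d$'s are constrained to be strictly positive and pairwise distinct; it is \emph{not} of the form $\zeta(s_i)\sum_{\sigma\in \textbf{L}_i^j}\zeta(\sigma(\mathbf{s}_i^j))$, whose inner sum permits its smallest index to equal $0$. Bridging that mismatch means adding back the $d=0$ configurations, which forces another application of the same correction identity and so produces second-order residues carrying two depth-one factors $\zeta(s_j)\zeta(s_{j'})$; no such terms occur on the right-hand side of \Cref{B}, so you would additionally have to prove that all of them cancel in characteristic $2$ --- bookkeeping with no counterpart in \Cref{Thm3}, and precisely the part you defer as ``the main obstacle.'' Finally, your sanity check cannot rescue this: specializing to $v$-adic values annihilates every residue term identically, so it only re-confirms \Cref{Thm3} and is blind to any mislabeling or miscounting of the four right-hand families, which is the entire assertion being proved.
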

\begin{proof}
From now on, we denote 
\begin{eqnarray*}
{\prod}_0^i&:=&\widetilde{S}_{d_{s_i}}(s_i)\prod_{d_{s_{\sigma(1),1}>\cdots>d_{s_{\sigma(1),k_1}}>\cdots>d_{s_{\sigma(n),k_n}}}}\prod_{j\neq i}^n\prod_{l=1}^{k_j}\widetilde{S}_{d_{s_j,l}}(s_j)\prod_{l=1}^{k_i-1}\widetilde{S}_{d_{s_i,l}}(s_i),\\
{\prod}_{2i}^j&:=&\widetilde{S}_{d_{s_j}}(s_j)\prod_{d_{s_{\sigma(1),1}>d_{s_{\sigma(1),k_1}}>\cdots>d_{s_{\sigma(i),k_i-2}}>d_{s_{\sigma(2s_i)}}>\cdots>d_{s_{\sigma(n),k_n}}}}\prod_{r\neq i,j}\prod_{l=1}^{k_r}\widetilde{S}_{d_{s_r,l}}(s_r)\\
&&\prod_{l=1}^{k_j-1}\widetilde{S}_{d_{s_j,l}}(s_j)\prod_{l=1}^{k_i-2}\widetilde{S}_{d_{s_i,l}}(s_i)\widetilde{S}_{d_{2s_i}}(2s_i) \textit{ where $j\neq i$},\\
{\prod}_{2j}^j&:=&\widetilde{S}_{d_{s_j}}(s_j)\prod_{d_{s_{\sigma(1),1}>d_{s_{\sigma(1),k_1}}>\cdots>d_{s_{\sigma(i),k_i-2}}>d_{s_{\sigma(2s_i)}}>\cdots>d_{s_{\sigma(n),k_n}}}}\prod_{r\neq j}\prod_{l=1}^{k_r}\widetilde{S}_{d_{s_r,l}}(s_r)\\
&&\prod_{l=1}^{k_j-3}\widetilde{S}_{d_{s_j,l}}(s_j)\widetilde{S}_{d_{2s_j}}(2s_j),\\
{\prod}_{2j}^{2j}&:=&\widetilde{S}_{d_{2s_j}}(2s_j)\prod_{d_{s_{\sigma(1),1}>d_{s_{\sigma(1),k_1}}>\cdots>d_{s_{\sigma(i),k_i-2}}>d_{s_{\sigma(2s_i)}}>\cdots>d_{s_{\sigma(n),k_n}}}}\prod_{r\neq j}\prod_{l=1}^{k_r}\widetilde{S}_{d_{s_r,l}}(s_r)\\
&&\prod_{l=1}^{k_j-2}\widetilde{S}_{d_{s_j,l}}(s_j).
\end{eqnarray*}

Then, we consider each sum in above expression separately, and we have
\begin{eqnarray*}
&&\sum_{j, \ where \ k_j\neq 2}\zeta(s_j)\sum_{{\textit{$i\neq j$ where \ $k_i>1$}}}\sum_{\sigma\in \textbf{L}_i^j}\zeta(\sigma(\mathbf{s}_i^j))\\
&=&\sum_{j, \ where \ k_j\neq 2}k_j\sum_{i\neq j,\ where \ k_i>1}\sum_{\sigma\in \textbf{L}_i}\zeta(\sigma(\mathbf{s}_i))\\
&+&\sum_{j,\ where \ k_j>2}\sum_{i\neq j,\ where \ k_i>2}\sum_{\substack{\sigma\in \textbf{L}_i^j\\ \exists l\leq k_j-1,\ d_{s_j}=d_{s_{j,l}}}}{\prod}_{2i}^j\\
&+&\sum_{j, \ where \ k_j\neq 2}\sum_{i\neq j,\ where \ k_i>1}\sum_{\substack{\sigma\in \textbf{L}_i^j\\ d_{s_j}=d_{2s_i}}}{\prod}_{2i}^j\\
&+&\sum_{j, \ where \ k_j\neq 2}\sum_{i\neq j,\ where \ k_i>2}\sum_{\substack{\sigma\in \textbf{L}_i^j\\ \exists l\leq k_i-2,\  d_{s_j}=d_{s_{i,l}}}}{\prod}_{2i}^j\\
&+&\sum_{j, \ where \ k_j\neq 2}\sum_{i\neq j,\ where \ k_i>1}\sum_{\substack{\sigma\in \textbf{L}_i^j\\ \exists e\neq i,j,\ \exists l\leq k_e,\  d_{s_j}=d_{s_{e,l}}}}{\prod}_{2i}^j\\
&=&A+B+C+D+E
\end{eqnarray*}

We get this by splitting the sum into different cases, and in the case when each $d$'s are distinct, using the fact that there are $k_j-2$ position to put a $s_{j}$ in a string $(s_{j,1},\cdots,s_{j,k_j-3})$, Hence we get $A$.
We can see that $E=0$ in characteristic $2$ since by exchanging $j,e$, we get $2$ copies of each term in $E$. Similarly, we get $B=0$ by exchanging $d_{s_j}=d_{s_j,l}, d_{2s_j}$ in characteristic $2$.
\begin{eqnarray*}
&&\sum_{j, \ where \ k_j> 2}\zeta(s_j)\sum_{\sigma\in \textbf{L}_j^j}\zeta(\sigma(\mathbf{s}_j^j))\\
&=&\sum_{j, \ where \ k_j> 2}(k_j-2)\sum_{\sigma\in \textbf{L}_j}\zeta(\sigma(\mathbf{s}_j))+\sum_{j, \ where \ k_j> 3}\sum_{\substack{\sigma\in \textbf{L}_j^j\\ \exists l\leq k_j-3,\ d_{s_j}=d_{s_{j,l}}}}{\prod}_{2j}^j\\
&+&\sum_{j, \ where \ k_j> 2}\sum_{\substack{\sigma\in \textbf{L}_j^j\\ \exists l\leq k_j-3,\ d_{s_j}=d_{2s_j}}}{\prod}_{2j}^j+\sum_{j, \ where \ k_j> 2}\sum_{\substack{\sigma\in \textbf{L}_j^j\\ \exists e\neq j,\ \exists l\leq k_e,\ d_{s_j}=d_{s_{e,l}}}}{\prod}_{2j}^j\\
&=&F+G+H+I
\end{eqnarray*}

Similar reason to get this. And we notice that $G=0$ in characteristic $2$ by exchanging $d_{s_j}=d_{s_j,l}, d_{2s_j}$. And we have $I=D$ by exchanging $i,j$ in characteristic $2$.

\begin{eqnarray*}
&&\sum_{j,\ where\  k_j>1}\zeta(2s_j)\sum_{\sigma\in \textbf{L}_j^j}\zeta(\sigma(\mathbf{s}_j^{2j}))\\
&=&\sum_{j,\ where\  k_j>1}\sum_{\sigma\in \textbf{L}_j}\zeta(\sigma(\mathbf{s}_j))+\sum_{j,\ where\  k_j>1}\sum_{\substack{\sigma\in \textbf{L}_j^{2j}\\ \exists l\leq k_j-2,\ d_{2s_j}=d_{s_{j,l}}}}{\prod}_{2j}^{2j}\\
&+&\sum_{j,\ where\  k_j>1}\sum_{\substack{\sigma\in \textbf{L}_j^{2j}\\ \exists e\neq j,\ \exists l\leq k_e,\ d_{2s_j}=d_{s_{e,l}}}}{\prod}_{2j}^{2j}\\
&=&J+K+L
\end{eqnarray*}

Now we have $H=K$, $C=L$ in characteristic $2$.

\begin{eqnarray*}
&&\sum_{j=1}^n\zeta(s_j)\sum_{\sigma\in \textbf{L}_0}\zeta(\sigma(\mathbf{s}_0^j))\\
&=&\sum_{j=1}^nk_j\sum_{\sigma\in \textbf{L}_0^{j}}\zeta(\sigma(\mathbf{s}_0))+\sum_{j, where\  j>1}\sum_{\substack{\sigma\in \textbf{L}_0^j\\ \exists l\leq k_j,\ d_{s_j}=d_{s_{j,l}}}}{\prod}_0^j+\sum_{j=1}^n\sum_{\substack{\sigma\in \textbf{L}_0^{j}\\ \exists e\neq j,\ l\leq k_e,\ d_{s_j}=d_{s_{e,l}}}}{\prod}_0^j\\
&=&M+N+O
\end{eqnarray*}

Here we have $O=0$ in characteristic $2$.
So the remaining terms add up to
\begin{eqnarray*}
&&A+F+J+\phi (N+M)\\
&=&\left(\sum_{r, \ where \ k_r\neq 2}k_r\sum_{j\neq r,\ where \ k_j>1}+\sum_{j,\ where\ k_j>2}k_j-2+\sum_{j,\ where\ k_j>1}+\phi\sum_{j,\ where k_j>1}\right)\\
&&\sum_{\sigma\in \textbf{L}_j}\zeta(\sigma(\mathbf{s}_j))+\phi\sum_{j=1}^n\sum_{\sigma\in \textbf{L}_0}\zeta(\sigma(\mathbf{s}_0))\\
&=&\left(\phi\sum_{j,\ where\ k_j>1}-\sum_{j,\ where k_j>1}k_j+\sum_{j,\ where\ k_j>2}k_j+\sum_{j,\ where\ k_j>1}+\phi\sum_{j,\ where\ k_j>1}\right)\\
&&\sum_{\sigma\in \textbf{L}_j}\zeta(\sigma(\mathbf{s}_j))+\phi\sum_{j=1}^n\sum_{\sigma\in \textbf{L}_0}\zeta(\sigma(\mathbf{s}_0))\\
&=&\sum_{{\textit{$i$ where $k_i>1$}}}\sum_{\sigma\in \textbf{L}_i}\zeta(\sigma(\mathbf{s}_i))+\phi\sum_{\sigma\in \textbf{L}_0}\zeta(\sigma(\mathbf{s}_0))
\end{eqnarray*}
This follows from the fact that we are in characteristic $2$ fields.
\end{proof}
\begin{Rem}
Actually, the proofs works at $d_i$ (unordered) tuple-wise, so works at $S_d$ level, so the above theorems remain true if we replace each $\zeta(\mathbf{s})$ by $\zeta(d,\mathbf{s})$, i.e. the $d-$th truncation of multiple zeta values. 
\end{Rem}

\begin{Def}
Given a rational function field $K=\mathbb{F}_q(t)$ and a monic finite prime $v$, we define

$M:={\normalfont \text{Span}}_{\mathbb{F}_q(t)}\{ \zeta(\mathbf{s}): s_j>0 \}$, 
$\widetilde{M}:={\normalfont \text{Span}}_{\mathbb{F}_q(t)}\{ \zeta_v^{C-M}(\mathbf{s}): s_j>0 \}$ where we use $\zeta_v^{C-M}$ to denote Chang and Mishiba's motivic $v-$adic MZVs.

Define $\phi_v$ to be the map $\phi_v: M\longrightarrow \widetilde{M}$, given by $\phi_v(\zeta(\mathbf{s}))=\zeta_v(\mathbf{s})$.
\end{Def}

\begin{Th}\cite{CM17}
$\phi_v$ defined as above is a well defined surjective $\mathbb{F}_q(t)-$linear map for each prime $v$.
\end{Th}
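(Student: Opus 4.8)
The plan is to observe that, once well-definedness is granted, both $\mathbb{F}_q(t)$-linearity and surjectivity of $\phi_v$ are formal. By construction $M$ is spanned over $\mathbb{F}_q(t)$ by the values $\zeta(\mathbf{s})$ with $s_j>0$, so a linear map out of $M$ is determined by its values on these generators; the prescribed images $\zeta_v(\mathbf{s})=\zeta_v^{C-M}(\mathbf{s})$ are exactly the generators of $\widetilde{M}$, whence $\phi_v$ is $\mathbb{F}_q(t)$-linear by construction and its image already contains every generator of $\widetilde{M}$, giving surjectivity. Consequently the entire content of the statement is that $\phi_v$ is well defined, i.e.\ that every $\mathbb{F}_q(t)$-linear relation $\sum_i c_i\,\zeta(\mathbf{s}_i)=0$ among the $\infty$-adic values forces the corresponding relation $\sum_i c_i\,\zeta_v(\mathbf{s}_i)=0$ among the $v$-adic values.

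To establish this I would pass through the motivic origin of both families. Following Anderson--Thakur, Papanikolas, and Chang--Papanikolas--Yu, each $\zeta(\mathbf{s})$ with $s_j>0$ is the $\infty$-adic period of a mixed Carlitz--Tate pre-$t$-motive $\mathcal{M}_{\mathbf{s}}$; assembling the $\mathcal{M}_{\mathbf{s}}$ inside the Tannakian category they generate yields a finite-dimensional motivic space $\mathcal{H}$ over $\mathbb{F}_q(t)$ carrying a distinguished class $\zeta^{\mathrm{mot}}(\mathbf{s})$ for each admissible $\mathbf{s}$, together with two $\mathbb{F}_q(t)$-linear realization maps $\rho_\infty\colon \mathcal{H}\to K_\infty$ and $\rho_v\colon \mathcal{H}\to K_v$ arising from the $\infty$-adic and $v$-adic rigid-analytic trivialisations of the same motives and satisfying $\rho_\infty(\zeta^{\mathrm{mot}}(\mathbf{s}))=\zeta(\mathbf{s})$ and $\rho_v(\zeta^{\mathrm{mot}}(\mathbf{s}))=\zeta_v(\mathbf{s})$. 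The second identity is precisely the Chang--Mishiba comparison of \cite{CM17}, realising the $v$-adic MZV as a $v$-adic coordinate of the logarithm of the $t$-module attached to $\mathcal{M}_{\mathbf{s}}$ at the same algebraic point that produces $\zeta(\mathbf{s})$ at $\infty$. The crucial structural input is that $\rho_\infty$ is \emph{injective}: by Papanikolas' Tannakian duality together with the Anderson--Brownawell--Papanikolas linear independence criterion, the $\mathbb{F}_q(t)$-linear relations among the $\infty$-adic periods coincide with those already holding at the motivic level. Granting this, a relation $\sum_i c_i\,\zeta(\mathbf{s}_i)=\rho_\infty\!\left(\sum_i c_i\,\zeta^{\mathrm{mot}}(\mathbf{s}_i)\right)=0$ forces $\sum_i c_i\,\zeta^{\mathrm{mot}}(\mathbf{s}_i)=0$ in $\mathcal{H}$, and applying $\rho_v$ delivers $\sum_i c_i\,\zeta_v(\mathbf{s}_i)=0$; thus $\phi_v=\rho_v\circ(\rho_\infty)^{-1}$ is well defined on $M=\operatorname{im}\rho_\infty$.

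The main obstacle is the injectivity of $\rho_\infty$, which is exactly the deep transcendence-theoretic assertion that the $\infty$-adic MZVs exhibit no relations beyond the motivic ones; this is where the full strength of Papanikolas' theory and the ABP criterion enters, and it is the step that cannot be reduced to the elementary $\widetilde{S}_d$-manipulations used in \Cref{Thm2} and \Cref{Thm3}. A secondary, more technical point is the verification that $\zeta(\mathbf{s})$ and $\zeta_v(\mathbf{s})$ genuinely arise as the two realizations of one and the same motivic class --- that is, that the $v$-adic logarithmic interpretation attaches to exactly the $t$-module and algebraic point governing the $\infty$-adic period, and that the relevant $v$-adic logarithm converges --- so that $\rho_\infty$ and $\rho_v$ are indeed defined on a common $\mathcal{H}$. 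Once these two ingredients are secured, the $\mathbb{F}_q(t)$-linearity and surjectivity follow formally as indicated above, completing the proof.
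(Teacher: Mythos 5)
The paper does not actually prove this statement: it is imported wholesale from Chang--Mishiba \cite{CM17} (note the citation attached to the theorem), and no argument for it appears anywhere in the text. So there is no ``paper proof'' to compare against; what can be judged is whether your outline is a faithful reconstruction of the cited result, and on that score it is broadly right but it is a roadmap rather than a proof. Your reduction is correct and worth stating: linearity and surjectivity are formal because $M$ and $\widetilde{M}$ are defined as spans of the respective generators, so the entire content is well-definedness, i.e.\ that every $\mathbb{F}_q(t)$-linear relation $\sum_i c_i\zeta(\mathbf{s}_i)=0$ forces $\sum_i c_i\zeta_v(\mathbf{s}_i)=0$. Your mechanism for this --- realize both families as $\infty$-adic and $v$-adic specializations of common ``motivic'' classes, then show all $\infty$-adic linear relations already hold at the motivic level --- is indeed the architecture of \cite{CM17}.

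Two caveats. First, the technical engine you name is not quite the one used: Chang--Mishiba's transfer principle rests on their logarithmic interpretation (each $\zeta(\mathbf{s})$ as a coordinate of the logarithm of an explicit $t$-module at an algebraic point, following Anderson--Thakur) combined with Yu's sub-$t$-module theorem, the function-field analogue of W\"ustholz's subgroup theorem, which shows that $\mathbb{F}_q(t)$-linear relations among such logarithm coordinates arise from morphisms of $t$-modules and hence persist $v$-adically; the ABP criterion and Papanikolas' Tannakian theory, which you invoke, are aimed at algebraic independence and are not the tool that makes this particular kernel-containment argument work. Second, and more importantly, the step you label ``the main obstacle'' --- injectivity of $\rho_\infty$, i.e.\ that $\infty$-adic MZVs admit no relations beyond motivic ones --- is precisely the deep content of the cited theorem, and your proposal defers it entirely to the literature. (Note also that full injectivity of $\rho_\infty$ is more than is needed; the containment $\ker\rho_\infty\cap\mathrm{Span}\{\zeta^{\mathrm{mot}}(\mathbf{s})\}\subseteq\ker\rho_v$ suffices, and that containment is what \cite{CM17} actually establishes.) So your submission correctly locates where the difficulty lives and sketches the right kind of argument, but, like the paper itself, it ultimately cites rather than proves the statement.
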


We know that $M$ has a natural product structure induced from the shuffle product of words.
What's more, lots of calculation evidence suggests that $\phi_v$ may be able to induce an algebraic structure on $\widetilde{M}$ as a push forward of the shuffle algebra structure on $M$. If this is the case, then applying the above results, we get the following results.

\begin{Conj}\label{Thm4}
Let $K=\mathbb{F}_q(t)$, $v$ is a prime, $\mathbf{s}=(s_i)_{i=1}^n$. 
If $s_i$ are distinct and $q-$even and $n$ is odd, then we have:
\begin{eqnarray*}
\sum_{\sigma\in S_n}sgn(\sigma)\zeta_v^{C-M}(\sigma(\mathbf{s}))=0
\end{eqnarray*}
where $S_n$ is the symmetry group and sgn is the sign function on $S_n$.
\end{Conj}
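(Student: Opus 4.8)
The plan is to obtain the identity by transporting \Cref{A} across the map $\phi_v$. \Cref{A} is, in effect, the shuffle-algebra refinement of \Cref{Thm2}: it records the alternating sum of ordinary MZVs not as $0$ but as a sum of products in which each nonvanishing contribution is guarded by a depth-one factor $\zeta(s_j)$. Restricting to positive $q-$even tuples (so that the motivic values are defined), I would first read \Cref{A} as an identity inside the shuffle algebra $M$,
$$\sum_{\sigma\in S_n}sgn(\sigma)\zeta(\sigma(\mathbf{s}))=\sum_{j=1}^n(-1)^{n-j}\,\zeta(s_j)\sum_{\sigma\in S_n^j}sgn(\sigma)\zeta(\sigma(\mathbf{s}^j)),$$
each product $\zeta(s_j)\,\zeta(\sigma(\mathbf{s}^j))$ being an element of $M$ (via the shuffle product), so that both sides genuinely lie in the domain of $\phi_v$.

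Next I would apply $\phi_v$ to both sides. The $\mathbb{F}_q(t)-$linearity of $\phi_v$ turns the left side into $\sum_{\sigma}sgn(\sigma)\zeta_v^{C-M}(\sigma(\mathbf{s}))$, which is exactly the quantity we wish to show vanishes. For the right side I would invoke the conjectural algebra-homomorphism property of $\phi_v$ -- the push-forward of the shuffle structure from $M$ to $\widetilde{M}$ -- to factor $\phi_v(\zeta(s_j)\cdot X)=\phi_v(\zeta(s_j))\cdot\phi_v(X)$ for each $j$. Since each $s_j$ is $q-$even, the depth-one motivic value $\phi_v(\zeta(s_j))=\zeta_v^{C-M}(s_j)$ vanishes, mirroring the vanishing $\zeta_v(s_j)=0$ that drives the proof of \Cref{Thm2}. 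Hence every summand on the right side is killed, the right side is $0$, and the conjecture follows.

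The main obstacle is precisely the ingredient that keeps this statement a conjecture rather than a theorem: one must know that $\phi_v$ is an algebra homomorphism, i.e. that the shuffle product on $M$ descends to a well-defined product on $\widetilde{M}$ compatible with $\phi_v$, which is exactly the structure that the numerical evidence only suggests; without it the factorization in the second step is unjustified. A secondary point to verify is the vanishing $\zeta_v^{C-M}(s_j)=0$ at $q-$even depth-one arguments, which I would establish at the motivic level from the Chang--Mishiba construction, rather than merely inferring it from the realization that sends $\zeta_v^{C-M}(s_j)$ to $\zeta_v(s_j)=0$. Note that a direct combinatorial imitation of the proof of \Cref{Thm2} is not available here, since the defining $\widetilde{S}_d-$decomposition has no evident lift to the motivic category -- which is why routing through \Cref{A} and $\phi_v$ is the natural strategy.
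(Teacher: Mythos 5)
Your proposal follows exactly the route the paper intends: the statement is left as a conjecture precisely because the pushforward of the shuffle-algebra structure along $\phi_v$ (i.e.\ the factorization $\phi_v(\zeta(s_j)\cdot X)=\phi_v(\zeta(s_j))\cdot\phi_v(X)$) is supported only by computational evidence, and, granting it, one applies $\phi_v$ to \Cref{A} and annihilates the right-hand side via the vanishing of the depth-one $v$-adic values at $q$-even arguments. You correctly isolated that conditional ingredient as the sole obstruction, so your argument coincides with the paper's intended derivation of \Cref{Thm4} from \Cref{A}.
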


\begin{Conj}\label{Thm5}
Let $K=\mathbb{F}_q(t)$, $\mathrm{char}(\mathbb{F}_q)=2$, $v$ a prime, $n\geq 1$, $s_i$ and $2s_i$ are all distinct $($\textit{where $2s_i$ is included only when $k_i>1$}$)$, $q-$even and $k_i\geq 1$, $\forall i\leq n$. Let $\phi:=\sum_{i=1}^nk_i$, and
$\mathbf{s}_0:=((s_1)_{k_1},\cdots,(s_n)_{k_n}).$
If we further have $k_i> 1$, define
$$\mathbf{s}_i:=((s_1)_{k_1},\cdots,(s_{i})_{k_i-2},2s_i,\cdots,(s_n)_{k_n}).$$
Then the following holds:
\begin{eqnarray*}
\sum_{{\textit{$i$ where $k_i>1$}}}\left(\sum_{\sigma\in \textbf{L}_i}\zeta_v^{C-M}(\sigma(\mathbf{s}_i))\right)+\phi\sum_{\sigma\in \textbf{L}_0}\zeta_v^{C-M}(\sigma(\mathbf{s}_0))=0.
\end{eqnarray*}
where $\textbf{L}_i$ is the set consisting of all possible re-orders of $\mathbf{s}_i$.
Here $(a)_j$ means a list of $j$ copies of $a$.
\end{Conj}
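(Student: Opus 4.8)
The plan is to obtain this relation by transporting its classical counterpart \Cref{B} through the linear map $\phi_v$, in direct parallel to how \Cref{Thm4} is meant to follow from \Cref{A}. Under the present hypotheses the classical identity of \Cref{B} applies verbatim, so the first step is to regard that identity not merely as a numerical equality in $K_\infty$ but as a relation in the shuffle algebra $M$: its derivation proceeds by reindexing the defining sums and by expanding products of MZVs via the shuffle rule, and by the remark following \Cref{B} it already holds at the level of the truncations, so it lifts to a relation among the motivic symbols $\zeta(\mathbf{s})\in M$.

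Granting this, I would apply $\phi_v\colon M\to\widetilde{M}$ to both sides. By $\mathbb{F}_q(t)$-linearity and $\phi_v(\zeta(\mathbf{s}))=\zeta_v^{C-M}(\mathbf{s})$, the left-hand side of \Cref{B} is carried exactly to
$$\sum_{i\text{ where }k_i>1}\Bigl(\sum_{\sigma\in\mathbf{L}_i}\zeta_v^{C-M}(\sigma(\mathbf{s}_i))\Bigr)+\phi\sum_{\sigma\in\mathbf{L}_0}\zeta_v^{C-M}(\sigma(\mathbf{s}_0)),$$
which is the left-hand side of the asserted identity. It therefore remains to see that $\phi_v$ annihilates the right-hand side of \Cref{B}. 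Every summand there is a product whose first factor is a depth-one value $\zeta(s_j)$ or $\zeta(2s_j)$. Invoking the conjectural hypothesis that $\phi_v$ is an algebra homomorphism for the push-forward of the shuffle product to $\widetilde{M}$, each such product is sent to a product one of whose factors is $\zeta_v^{C-M}(s_j)$ or $\zeta_v^{C-M}(2s_j)$.

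To finish I would use the depth-one vanishing in the motivic $v$-adic setting, namely $\zeta_v^{C-M}(s)=0$ whenever $s$ is $q$-even, the motivic counterpart of the vanishing $\zeta_v(s)=0$ that drives the proofs of \Cref{Thm2} and \Cref{Thm3}. Since each $s_j$ is $q$-even and $(q-1)\mid s_j$ forces $(q-1)\mid 2s_j$, both $\zeta_v^{C-M}(s_j)=0$ and $\zeta_v^{C-M}(2s_j)=0$; hence every term on the right-hand side of \Cref{B} acquires a vanishing factor and the whole side maps to $0$. Combining with the previous paragraph yields the claimed relation.

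The main obstacle is exactly the algebra-homomorphism step, which is why the statement is a conjecture rather than a theorem: one must know that $\ker\phi_v$ is an ideal for the shuffle product, equivalently that $\phi_v$ induces a well-defined product on $\widetilde{M}$ with $\phi_v(xy)=\phi_v(x)\phi_v(y)$, so that the numerical product identities entering \Cref{B} persist after passage to the motivic $v$-adic values. This is precisely the push-forward algebra structure flagged before \Cref{Thm4}, and the entire argument is conditional on it. A smaller point still to be pinned down is the motivic depth-one vanishing $\zeta_v^{C-M}(s)=0$ for $q$-even $s$; unlike the genuine value $\zeta_v(s)$, this is not automatic from a zero period, so it must be read off from Chang--Mishiba's construction or established as a separate input before the argument is complete.
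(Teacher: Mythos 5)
Your proposal follows the paper's intended derivation exactly: the statement is left as a conjecture precisely because it is conditional on $\phi_v$ pushing the shuffle-algebra structure of $M$ forward to $\widetilde{M}$, and under that hypothesis the paper, like you, obtains the relation by applying the $\mathbb{F}_q(t)$-linear map $\phi_v$ to the identity of \Cref{B}, the left-hand side becoming the claimed sum and the right-hand side dying because every term carries a depth-one factor $\zeta_v^{C-M}(s_j)$ or $\zeta_v^{C-M}(2s_j)$ at a $q$-even argument. Your closing caveat that the depth-one vanishing $\zeta_v^{C-M}(s)=0$ for $q$-even $s$ must be extracted from Chang--Mishiba's construction is a fair point of care, but it is an input the paper's route requires as well, so your argument is the same as the paper's.
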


\section{Multiple Harmonic Type Sum}

In this section, we generalize our results to more general cases and find some interesting family of relations of classical MZVs.
\begin{Def}
A sum $H(s_1,\cdots,s_r)$ is called of Multiple Harmonic Type taking values in a commutative ring $(R;``+",``\cdot")$ (infinte formal sum in $R$ is allowed), if 
$$H(s_1,\cdots,s_r):=\sum_{d_1>\cdots>d_r}h(d_1,s_1)\cdots h(d_r,s_r),$$
where each $d_i\in D$, $s_i\in S$, $(D;``>")$ is in a totally ordered set, $(S;``+")$ is a magma, and $h(d,s)$ is a two-variables function taking values in $R$.
\end{Def}

Then, following the proof of \Cref{A} and \Cref{B}, we have that
\begin{Th}\label{C}
Let $\mathbf{s}=(s_i)_{i=1}^n$, $\mathbf{s}^j:=(s_i)_{i\neq j}$ are defined as above, and $H(\mathbf{s})$ is a Multiple Harmonic Type Sum taking values in a commutative ring $R$. 
If $s_i$ are distinct and $n$ is odd, then we have:
\begin{eqnarray*}
\sum_{\sigma\in S_n}sgn(\sigma)H(\sigma(\mathbf{s}))=\sum_{j=1}^n(-1)^{n-j}H(s_j)\left(\sum_{\sigma\in S_n^j}sgn(\sigma)H(\sigma(\mathbf{s}^j))\right),
\end{eqnarray*}
where $S_n, S_n^j$ are the symmetry group of $\{1,\cdots,n\}$, and $\{1,\cdots,\hat{j},\cdots,n\}$ respectively, in particular, we have $S_n^j\cong S_{n-1}$, sgn is the sign function on $S_n$.
\end{Th}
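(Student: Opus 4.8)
The plan is to mimic the proof of \Cref{A} exactly, since \Cref{C} is precisely the abstraction of that argument in which the specific zeta-value ingredients are replaced by the generic data $(D,>)$, $(S,+)$, and $h(d,s)$. The only properties of the zeta setting that \Cref{A}'s proof actually used are: that $R$ is a commutative ring so products can be freely rearranged; that the summation index sets $d_i$ live in a totally ordered set so that chains $d_1>\cdots>d_n$ make sense; and the combinatorial sign-cancellation bookkeeping on $S_n$. Crucially, the hypothesis ``$s_i$ are $q$-even'' in \Cref{A} was used only to guarantee $\zeta(s_i)=0$, which let us replace $\widetilde{S}_0(s_i)=-\sum_{d\geq 1}\widetilde{S}_d(s_i)$; but in the present statement no such vanishing is assumed—the depth-one terms $H(s_j)$ are carried along explicitly on the right-hand side. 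So the present theorem is really the ``before simplification'' identity underlying \Cref{A}, and it should hold with no arithmetic hypothesis on the $s_i$ beyond distinctness.

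First I would set $\prod:=h(d_{s_1},s_1)\cdots h(d_{s_n},s_n)$ and expand the right-hand side. Using the natural embedding $\iota_j:S_n^j\hookrightarrow S_n$ fixing $j$ and preserving sign, each term $(-1)^{n-j}H(s_j)H(\sigma(\mathbf{s}^j))$ becomes a sum over chains $d_{s_{\sigma(1)}}>\cdots>d_{s_{\sigma(j-1)}}>d_{s_{\sigma(j+1)}}>\cdots>d_{s_{\sigma(n)}}$ together with a free index $d_{s_j}$ ranging over $D$. I would split this free index into the two mutually exclusive cases: (i) $d_{s_j}$ coincides with one of the already-present $d_{s_k}$ ($k\neq j$), call this $\sum_1$; and (ii) $d_{s_j}$ is distinct from all of them, call this $\sum_2$. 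This is exactly the decomposition in \Cref{A}'s proof, and the totally ordered structure of $D$ is all that is needed to make ``coincides with'' versus ``distinct from'' well defined.

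The two main combinatorial steps then transcribe verbatim. For $\sum_1$, I would show it vanishes by the same three-case matching ($\sigma^{-1}(k)<k<j$, $k\le\sigma^{-1}(k)<j$, $k<j<\sigma^{-1}(k)$), where in each case one produces a companion permutation $\widetilde\sigma\in S_n^k$ giving the identical product but with opposite overall sign, so the terms cancel in pairs; this uses only commutativity of multiplication in $R$ and the sign multiplicativity on $S_n$. For $\sum_2$, I would fix an $n$-element subset $D=\{d_{s_i}\}\subset D$ (all distinct) and verify that each $\sigma\in S_n$ arises from exactly one term of $S_n^j$ for each of the $n$ choices of $j$, via the explicit construction of $\sigma_{\sigma(j)}$ in \Cref{A}; summing the signs $(-1)^{n-\sigma(j)}\mathrm{sgn}(\sigma)$ over $j=1,\dots,n$ reproduces $\mathrm{sgn}(\sigma)$ precisely because $n$ is odd. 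Thus $\sum_2=\sum_{\sigma\in S_n}\mathrm{sgn}(\sigma)H(\sigma(\mathbf{s}))$, and adding $\sum_1=0$ gives the claim.

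The substantive point—and the only place requiring care—is to confirm that \Cref{A}'s argument truly never invoked the $q$-evenness or any field-characteristic hypothesis, and that $(S,+)$ being merely a magma (and $D$ merely totally ordered, not a group) suffices. I expect this to be routine but it is the main obstacle: I would check that the sign-cancellation in $\sum_1$ and the $n$-to-$1$ counting in $\sum_2$ manipulate only the ordered indices $d_{s_i}$ and the permutation signs, treating the values $h(d_{s_i},s_i)$ as opaque elements of the commutative ring $R$, so that no additive structure on $S$ beyond the ability to form the tuples $\mathbf{s}$ and $\mathbf{s}^j$ is used. Once that audit is complete the identity follows, and I would simply remark that \Cref{A} is recovered by specializing $H=\zeta$, $h(d,s)=\widetilde S_d(s)$ (or $S_d(s)$), whereupon the $q$-even hypothesis forces each $H(s_j)=\zeta(s_j)=0$ and collapses the right-hand side to $0$.
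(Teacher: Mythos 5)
Your proposal is correct and takes essentially the same approach as the paper: the paper's own proof of \Cref{C} consists precisely of observing that the argument for \Cref{A} transcribes verbatim to the abstract setting, using only commutativity of $R$, the total order on $D$, and the $S_n$ sign combinatorics with $n$ odd, which is exactly the audit you perform. One small correction to your closing remark: $q$-evenness is in fact never used in \Cref{A}'s proof either, and it is the $v$-adic values $\zeta_v(s_j)$ (not the $\infty$-adic $\zeta(s_j)$) that vanish at $q$-even arguments, so specializing \Cref{C} recovers \Cref{A} as stated, with its right-hand side intact rather than collapsed to $0$.
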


\begin{Th}\label{D}
Let $\mathbf{s}=(s_i)_{i=1}^n$, $\mathbf{s}^j:=(s_i)_{i\neq j}$ are defined as above, and $H(\mathbf{s})$ is a Multiple Harmonic Type Sum taking values in a commutative ring $R$. If $char(R)=2$, $h(d,2s)=h^2(d,s)$, $s_i$ and $2s_i$ are all distinct(\textit{where $2s_i$
is included only when $k_i>1$}), and $k_i\geq 1$, $\forall i\leq n$. 
Let $\phi:=\sum_{i=1}^nk_i$, and
$\mathbf{s}_0:=((s_1)_{k_1},\cdots,(s_n)_{k_n}).$
If we further have $k_i>1$, define
$\mathbf{s}_i:=((s_1)_{k_1},\cdots,(s_{i})_{k_i-2},2s_i,\cdots,(s_n)_{k_n}).$
Then the following holds:
\begin{eqnarray*}
&&\sum_{{\textit{$i$ where $k_i>1$}}}\sum_{\sigma\in \textbf{L}_i}H(\sigma(\mathbf{s}_i))+\phi\sum_{\sigma\in \textbf{L}_0}H(\sigma(\mathbf{s}_0))\\
&=&\sum_{j, \ where \ k_j\neq 2}H(s_j)\sum_{{\textit{$i\neq j$ where $k_i>1$}}}\sum_{\sigma\in \textbf{L}_i^j}H(\sigma(\mathbf{s}_i^j))\\
&+&\sum_{j, \ where \ k_j> 2}H(s_j)\sum_{\sigma\in \textbf{L}_j^j}H(\sigma(\mathbf{s}_j^j))+\sum_{j,\ where\  k_j>1}H(2s_j)\sum_{\sigma\in \textbf{L}_j^j}H(\sigma(\mathbf{s}_j^{2j}))\\
&+&\sum_{j=1}^nH(s_j)\phi\sum_{\sigma\in \textbf{L}_0^j}H(\sigma(\mathbf{s}_0^j)),
\end{eqnarray*}
where $$\mathbf{s}_i^j:=((s_1)_{k_1},\cdots,(s_{i})_{k_i-2},2s_i,\cdots,(s_j)_{k_j-1},\cdots,(s_n)_{k_n}),$$
$$\mathbf{s}_i^i:=((s_1)_{k_1},\cdots,(s_{i})_{k_i-3},2s_i,\cdots,(s_n)_{k_n}),$$
$$\mathbf{s}_i^{2j}:=((s_1)_{k_1},\cdots,(s_{i})_{k_i-2},2s_i,\cdots,(s_j)_{k_j-1},(s_{j+1})_{k_{j+1}}\cdots,(s_n)_{k_n}),$$
and $\textbf{L}_i^j$ is the set consisting of all possible re-orders of $\mathbf{s}_i^j$, when $k_i>1,j\neq i$ or $k_i>2,j=i$.
\end{Th}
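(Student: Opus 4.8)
The plan is to observe that the proof of \Cref{B} is entirely formal: it manipulates the defining nested sums term by term and never uses any property of $\zeta$ beyond (i) that $\zeta(\mathbf{s})=\sum_{d_1>\cdots>d_r}\prod_i\widetilde{S}_{d_i}(s_i)$ is a nested sum of products valued in a commutative ring, (ii) the Frobenius identity $\widetilde{S}_d(2s)=\widetilde{S}_d(s)^2$, and (iii) $\mathrm{char}=2$. These are matched one-for-one by the hypotheses of \Cref{D}: the definition of a Multiple Harmonic Type Sum supplies (i), the assumption $h(d,2s)=h^2(d,s)$ supplies (ii), and $\mathrm{char}(R)=2$ supplies (iii). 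So first I would introduce the abstract product notation $\prod:=\prod_{j=1}^n\prod_{1\le l\le k_j}h(d_{s_j,l},s_j)$, together with the truncated variants ${\prod}_0^i$, ${\prod}_{2i}^j$, ${\prod}_{2j}^j$, ${\prod}_{2j}^{2j}$ defined exactly as in \Cref{B} but with every $\widetilde{S}_{d}(s)$ replaced by $h(d,s)$, so that $H(\mathbf{s})=\sum_{d_1>\cdots>d_r}\prod$.

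Next I would transcribe the decomposition of \Cref{B} verbatim. Expanding each product $H(s_j)\sum_\sigma H(\sigma(\mathbf{s}_i^j))$, $H(s_j)\sum_\sigma H(\sigma(\mathbf{s}_j^j))$, $H(2s_j)\sum_\sigma H(\sigma(\mathbf{s}_j^{2j}))$ and $H(s_j)\sum_\sigma H(\sigma(\mathbf{s}_0^j))$ into the stuffle-type cases---according to whether the newly adjoined index $d_{s_j}$ or $d_{2s_j}$ is distinct from all present indices or collides with one of them---produces the same labelled pieces $A,B,\dots,O$. Each of the vanishings $B=E=G=O=0$ and each of the matchings $I=D$, $H=K$, $C=L$ is witnessed by an explicit involution on the index data (either swapping two summation indices $d$, or swapping two outer labels such as $i$ and $j$) that sends every summand to an equal summand; this uses only that $R$ is commutative, that $\mathrm{char}(R)=2$, and---wherever a $2s$-entry participates---the identity $h(d,2s)=h(d,s)^2$. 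Collecting the survivors $A+F+J+\phi(N+M)$ and simplifying the resulting coefficient in characteristic $2$ (using $\phi=\sum_i k_i$ and $\phi(\phi+1)=0$) returns the claimed left-hand side, exactly as in \Cref{B}.

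A feature worth recording is that the $q$-even hypothesis present in \Cref{B} is never actually invoked in its proof; it was needed only in \Cref{Thm2} and \Cref{Thm3} to force the depth-one contributions to vanish and hence make the right-hand side $0$. This is why \Cref{D} may be stated for an arbitrary commutative ring $R$ of characteristic $2$ equipped with the Frobenius property, with no arithmetic condition on the $s_i$ beyond distinctness of the set $\{s_i\}\cup\{2s_i:k_i>1\}$. That distinctness is exactly what keeps the various collision cases disjoint, so that the multiplicity counts ($k_j$ choices, $k_j-2$ choices, and the global factor $\phi$) do not overlap and the bookkeeping of the previous paragraph closes cleanly.

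The only point that is not purely mechanical is well-definedness: the sums defining $H$ may be infinite formal sums in $R$, so a priori neither the products $H(s_j)H(\cdots)$ nor the rearrangements are literally meaningful. I would resolve this exactly as in the Remark following \Cref{B}, by working $d$-tuple-wise: after fixing the unordered multiset of values taken by the indices $d_{s_j,l}$, each claimed identity collapses to a finite equality among the finitely many terms that realize that multiset, and both sides of the theorem are assembled from these finite blocks. The identity then holds block by block, so no notion of convergence in $R$ is required. I expect this reduction to the truncated, tuple-wise level to be the only real content; once it is in place, every cancellation is a finite characteristic-$2$ computation identical to the one carried out for \Cref{B}.
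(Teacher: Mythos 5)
Your proposal is correct and matches the paper's own treatment: the paper gives no separate argument for this theorem, stating only that it follows ``following the proof of \Cref{A} and \Cref{B},'' which is exactly your plan of transcribing the proof of \Cref{B} with $h(d,s)$ in place of $\widetilde{S}_d(s)$, using that the argument only needs the nested-sum structure, commutativity, $\mathrm{char}=2$, and the Frobenius identity $h(d,2s)=h(d,s)^2$. Your two added observations --- that the $q$-even hypothesis of \Cref{B} is never invoked (which is why it can be dropped here), and that infinite formal sums are handled tuple-wise as in the paper's remark on truncations $\zeta(d,\mathbf{s})$ --- are consistent with, and slightly more careful than, what the paper says.
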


Considering the classical multiple zeta values $\zeta_{\mathbb{Q}}(\mathbf{s})$ and finite multiple zeta values $\zeta_{\mathbb{Q}}^{\mathcal{A}}(\mathbf{s})$ over $\mathbb{Q}$, we can see that they are the sums of Multiple Harmonic Type and hence, by \Cref{C}, we have
\begin{Cor}
Let $\mathbf{s}=(s_i)_{i=1}^n\in \mathbb{N}^n$, $\mathbf{s}^j:=(s_i)_{i\neq j}$ where $\zeta_{\mathbb{Q}}(\mathbf{s})$, $\zeta_{\mathbb{Q}}(\mathbf{s}^j)$ are convergent.
If $s_i$ are distinct and $n$ is odd, then we have
\begin{eqnarray*}
\sum_{\sigma\in S_n}sgn(\sigma)\zeta_{\mathbb{Q}}(\sigma(\mathbf{s}))=\sum_{j=1}^n(-1)^{n-j}\zeta_{\mathbb{Q}}(s_j)\sum_{\sigma\in S_n^j}sgn(\sigma)\zeta_{\mathbb{Q}}(\sigma(\mathbf{s}^j)),
\end{eqnarray*}
where $S_n, S_n^j$ are the symmetry group of $\{1,\cdots,n\}$, and $\{1,\cdots,\hat{j},\cdots,n\}$ respectively, in particular, we have $S_n^j\cong S_{n-1}$, sgn is the sign function on $S_n$.
\end{Cor}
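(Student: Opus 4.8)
The plan is to recognize $\zeta_{\mathbb{Q}}$ as a particular instance of a Multiple Harmonic Type Sum and then invoke \Cref{C} essentially verbatim. First I would set $R=\mathbb{R}$, which is a commutative ring, let the totally ordered index set be $(D,>)=(\mathbb{Z}_{\geq 1},>)$, take the magma $(S,+)=(\mathbb{N},+)$, and define the two-variable weight $h(d,s):=d^{-s}$. With these choices the defining formula $H(s_1,\dots,s_r)=\sum_{d_1>\cdots>d_r}h(d_1,s_1)\cdots h(d_r,s_r)$ becomes exactly $\sum_{n_1>\cdots>n_r\geq 1} n_1^{-s_1}\cdots n_r^{-s_r}=\zeta_{\mathbb{Q}}(s_1,\dots,s_r)$, so $\zeta_{\mathbb{Q}}$ is a Multiple Harmonic Type Sum in the sense of the preceding definition.

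Next I would check that the hypotheses of \Cref{C} are met. \Cref{C} requires only that $H$ be a Multiple Harmonic Type Sum in a commutative ring, that the $s_i$ be distinct, and that $n$ be odd; it imposes no characteristic-two or $q$-even condition, those belonging instead to \Cref{D}. All three hypotheses are exactly the assumptions of the corollary, so \Cref{C} applies with $H=\zeta_{\mathbb{Q}}$ and immediately yields the stated identity, the depth-one factors $H(s_j)=\zeta_{\mathbb{Q}}(s_j)$ and the lower-depth sums $\sum_{\sigma\in S_n^j}\mathrm{sgn}(\sigma)H(\sigma(\mathbf{s}^j))$ being read off directly.

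The remaining, and only genuinely nontrivial, point is analytic convergence, and this is where I would spend the real effort. A priori \Cref{C} is an identity of \emph{formal} infinite sums, proved by regrouping and relabelling permutations; to transfer it to genuine real numbers I would appeal to the Remark following \Cref{B}, which records that the argument in fact holds truncation-by-truncation, that is, at the level of the $d$-th partial sums $\zeta_{\mathbb{Q}}(d,\mathbf{s})$. Because every summand $h(d_i,s_i)=d_i^{-s_i}$ is a positive real, each multiple series converges if and only if it converges absolutely, so all the term-rearrangements and regroupings used in the proof of \Cref{C} are legitimate, and the finite alternating permutation sums commute with the limit over truncations. The convergence hypothesis on $\zeta_{\mathbb{Q}}(\mathbf{s})$ and on the $\zeta_{\mathbb{Q}}(\mathbf{s}^j)$ is precisely what must guarantee that every term appearing on both sides, including each reordering $\sigma(\mathbf{s})$ on the left and the isolated depth-one factor $\zeta_{\mathbb{Q}}(s_j)$ on the right, is finite; concretely, admissibility forces the leading index of each string to satisfy $s_{(\cdot)}\geq 2$.

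I expect the main obstacle to be exactly this passage from formal to convergent sums: one must ensure that no reordering $\sigma(\mathbf{s})$ on the left-hand side produces a non-admissible (hence divergent) string, so that the convergence hypothesis should be read as covering \emph{all} of the reorderings that occur, equivalently $s_i\geq 2$ for every $i$. Once that admissibility is in force, positivity of the summands and the resulting absolute convergence justify both the rearrangements and the interchange of the limit with the finite sum over $S_n$, reducing the corollary to a direct citation of \Cref{C}.
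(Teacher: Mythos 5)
Your proposal is correct and follows essentially the same route as the paper: the paper likewise observes that $\zeta_{\mathbb{Q}}$ is a Multiple Harmonic Type Sum (with $h(d,s)=d^{-s}$ implicitly) and deduces the identity directly from \Cref{C}. Your additional care about admissibility and absolute convergence (i.e.\ that the hypothesis must cover all reorderings $\sigma(\mathbf{s})$, forcing $s_i\geq 2$) is a sound elaboration of a point the paper handles only through the stated convergence hypothesis.
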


\begin{Cor}
Let $\mathbf{s}=(s_i)_{i=1}^n\in \mathbb{Z}_{\neq 0}^n$, $\mathbf{s}^j:=(s_i)_{i\neq j}$.
If $s_i$ are distinct and $n$ is odd, then we have
\begin{eqnarray*}
\sum_{\sigma\in S_n}sgn(\sigma)\zeta_{\mathbb{Q}}^{\mathcal{A}}(\sigma(\mathbf{s}))=0,
\end{eqnarray*}
where $S_n, S_n^j$ are the symmetry group of $\{1,\cdots,n\}$, and $\{1,\cdots,\hat{j},\cdots,n\}$ respectively, in particular, we have $S_n^j\cong S_{n-1}$, sgn is the sign function on $S_n$.
\end{Cor}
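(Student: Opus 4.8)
The plan is to deduce this from \Cref{C} in exactly the way the function-field corollaries are deduced from \Cref{Thm2}: realize the classical finite multiple zeta value as a Multiple Harmonic Type Sum, apply \Cref{C}, and then annihilate the entire right-hand side using the vanishing of depth-one finite zeta values. First I would present $\zeta_{\mathbb{Q}}^{\mathcal{A}}$ as a harmonic-type sum over the commutative ring $R=\mathcal{A}=\prod_p\mathbb{F}_p/\oplus_p\mathbb{F}_p$. Take the totally ordered set $D=\mathbb{Z}_{\ge1}$, the symbol set $S=\mathbb{Z}_{\neq0}$, and $h(d,s)=(h_p(d,s))_p$ with $h_p(d,s):=d^{-s}\bmod p$ when $0<d<p$ and $h_p(d,s):=0$ when $d\ge p$. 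For each prime $p$, the $p$-component of $\sum_{d_1>\cdots>d_r}h(d_1,s_1)\cdots h(d_r,s_r)$ is a finite sum that collapses precisely to $\sum_{p>d_1>\cdots>d_r\ge1}d_1^{-s_1}\cdots d_r^{-s_r}\bmod p$, so this formal sum represents $\zeta_{\mathbb{Q}}^{\mathcal{A}}(\mathbf{s})$; the ``infinite formal sum'' clause in the definition legitimizes the totality over $D$, while well-definedness in $\mathcal{A}$ is automatic since each coordinate is finite.

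Given that $s_i$ are distinct and $n$ is odd, \Cref{C} then yields
\begin{eqnarray*}
\sum_{\sigma\in S_n}sgn(\sigma)\zeta_{\mathbb{Q}}^{\mathcal{A}}(\sigma(\mathbf{s}))=\sum_{j=1}^n(-1)^{n-j}\zeta_{\mathbb{Q}}^{\mathcal{A}}(s_j)\left(\sum_{\sigma\in S_n^j}sgn(\sigma)\zeta_{\mathbb{Q}}^{\mathcal{A}}(\sigma(\mathbf{s}^j))\right).
\end{eqnarray*}
The key input is the depth-one vanishing $\zeta_{\mathbb{Q}}^{\mathcal{A}}(s)=0$ in $\mathcal{A}$ for every fixed nonzero integer $s$, which plays the role that $\zeta_v(s_i)=0$ for $q$-even $s_i$ played in the function-field setting. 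I would verify it componentwise: $\zeta_{\mathbb{Q},p}^{\mathcal{A}}(s)=\sum_{d=1}^{p-1}d^{-s}\bmod p$, and reindexing by $d\mapsto d^{-1}$ when $s>0$ (or reading $d^{-s}=d^{|s|}$ when $s<0$) converts this into the power sum $\sum_{d=1}^{p-1}d^{|s|}$. By the standard congruence this equals $-1$ if $(p-1)\mid|s|$ and $0$ otherwise; since $s$ is fixed and nonzero, $(p-1)\mid|s|$ for only finitely many $p$, so $\zeta_{\mathbb{Q},p}^{\mathcal{A}}(s)$ vanishes for all but finitely many $p$, placing $(\zeta_{\mathbb{Q},p}^{\mathcal{A}}(s))_p$ in $\oplus_p\mathbb{F}_p$ and hence giving $\zeta_{\mathbb{Q}}^{\mathcal{A}}(s)=0$ in $\mathcal{A}$.

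Substituting $\zeta_{\mathbb{Q}}^{\mathcal{A}}(s_j)=0$ for every $j$ into the displayed identity annihilates each summand on the right, leaving $\sum_{\sigma\in S_n}sgn(\sigma)\zeta_{\mathbb{Q}}^{\mathcal{A}}(\sigma(\mathbf{s}))=0$, as desired; note that no parity (``$q$-even'') hypothesis on the $s_i$ is needed, because in $\mathcal{A}$ only the cofinite-prime behavior matters. The point requiring care is not depth but bookkeeping: I must confirm that the single index set $D=\mathbb{Z}_{\ge1}$ faithfully reproduces the prime-dependent truncation $d_1<p$ in each coordinate (this is exactly why $h_p$ is forced to vanish for $d\ge p$), and that discarding the finitely many bad primes attached to each of the $n$ indices $s_1,\dots,s_n$ still leaves a cofinite set of good primes after taking the union over $j$. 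If realizing $\mathcal{A}$ as one coefficient ring feels delicate, an equivalent route is to apply the truncated ($S_d$-level) form of \Cref{C} recorded in the Remark at each prime $p$ individually and then descend to the quotient $\mathcal{A}$; both give the same conclusion.
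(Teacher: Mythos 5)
Your proposal is correct and follows exactly the paper's route: the paper also obtains this corollary by viewing $\zeta_{\mathbb{Q}}^{\mathcal{A}}$ as a Multiple Harmonic Type Sum, invoking \Cref{C}, and then killing the right-hand side via the depth-one vanishing $\zeta_{\mathbb{Q}}^{\mathcal{A}}(s)=0$ for $s\neq 0$ (the paper states this in one line as ``trivial''). Your explicit realization with $h_p(d,s)=d^{-s}\bmod p$ for $d<p$ and the power-sum congruence argument for the cofinite vanishing are precisely the details the paper leaves implicit.
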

\begin{proof}
This is trivial once we notice that $\zeta_{\mathbb{Q}}^{\mathcal{A}}(\sigma(s_i)=0$ as long as $s_i\neq 0$.
\end{proof}

\section*{acknowledgement}
I am especially thankful to Professor Dinesh S. Thakur for giving lots of valuable suggestions and reviewing the details of this paper.

\end{document}